\def\leaderfill{\leaders\hbox to .8em{\hss .\hss}\hfill}
\def\_#1{{\lower 0.7ex\hbox{}}_{#1}}
\newtheorem{theorem}{Theorem}
\newtheorem{Theorem}{Theorem}[section]
\newtheorem{Corollary}[Theorem]{Corollary}
\newtheorem{Lemma}[Theorem]{Lemma}
\newtheorem{Claim}[Theorem]{Claim}
\theoremstyle{Definition}
\newtheorem{Assumption}[Theorem]{Assumption}
\theoremstyle{Remark}
\newtheorem{Remark}[Theorem]{Remark}
\def\leaderfill{\leaders\hbox to .8em{\hss .\hss}\hfill}
\def\_#1{{\lower 0.7ex\hbox{}}_{#1}}
\def\fa{{\mathcal{F}}}
\def\Re{\operatorname{{Re}}}
\def\mI{\operatorname{{Im}}}
\def\Diff{\operatorname{{Diff}}}
\def\sing{\operatorname{{sing}}}
\title[On a theorem of Lyapunov-Poincaré in higher dimension]{On a theorem of Lyapunov-Poincaré in higher dimension}
\author{V. Le\'on and B. Sc\'ardua}
\address{V. Le\'on. ILACVN - CICN, Universidade Federal da Integração Latino-Americana, Parque tecnológico de Itaipu, Foz do Iguaçu-PR, 85867-970 - Brazil}
\email{victor.leon@unila.edu.br}
\address{B. Sc\'ardua. Instituto de Matem\'atica - Universidade Federal do Rio de Janeiro,
CP. 68530-Rio de Janeiro-RJ, 21945-970 - Brazil}
\email{bruno.scardua@gmail.com}
\keywords{foliation; center singularity; first integral; integrable form; Reeb theorem.}
\date{}
\begin{document}

\maketitle

\begin{abstract}The classical Lyapunov-Poincaré center theorem assures the existence of a first integral for an analytic one-form near a center singularity in dimension two, provided that the first jet of the one-form is nondegenerate. The basic point is the existence of an analytic first integral for the
given one-form. In this paper we consider generalizations for two main frameworks:
(i)  real analytic foliations of codimension one in higher dimension and (ii) singular holomorphic foliations in dimension two. All this is related to the problem of finding criteria assuring the existence of analytic first integrals for a given codimension one germ with a suitable first jet. Our approach consists in  giving an interpretation of the center theorem in terms of holomorphic foliations and, following an idea of Moussu, apply the holomorphic foliations arsenal in the obtaining the required first integral.
As a consequence we are able to revisit some of Reeb classical results on integrable perturbations of exact homogeneous one-forms, and prove some versions of these to the framework of non-isolated
(perturbations of transversely Morse type) singularities.
\end{abstract}


\section{Introduction and main results}
\label{section:introduction}
The classical Center theorem of Poincaré-Lyapunov (\cite{Lyapunov,Poincare}) reads as follows:
\begin{Theorem}
\label{Theorem:centertheorem} For a given germ of a real analytic one-form $\omega= a(x,y)dx + b(x,y)dy$
at the origin $0\in \mathbb R^2$, (i) having an isolated singularity for its first jet $\omega_1$ and
(ii) having a center at the origin (all leaves in a punctured neighborhood of the origin are diffeomorphic  to
the circle), then $\omega$ admits a real analytic first integral in the strong sense, which means that
$\omega=gdf$ for some real analytic function germs $f,g$ at $0\in \mathbb R^2$, with $g(0)\ne 0$ and
$f$ having a Morse singular point at the origin.
\end{Theorem}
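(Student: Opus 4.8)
The plan is to reinterpret the real center in terms of a singular holomorphic foliation and to exploit the finiteness of its holonomy, following Moussu. First I would pass from the one-form to its dual: writing $\om = i_X(dx\wedge dy)$ for a germ of real analytic vector field $X$ with $X(0)=0$, the center hypothesis says all orbits of $X$ in a punctured neighborhood of $0$ are closed. Since the first jet $\om_1$ has an isolated singularity, the linear part $DX(0)$ is invertible; a nonzero real eigenvalue, or an eigenvalue with nonzero real part, would produce saddle, node, or spiral behaviour incompatible with closed orbits, so $DX(0)$ has eigenvalues $\pm i\be$ with $\be\neq 0$. After a real linear change of coordinates I may assume $X_1$ is the rotation field $\be(-y\,\po_x + x\,\po_y)$.

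Next I would complexify. Extending $\om$ to a holomorphic germ $\tilde\om$ on $(\bc^2,0)$ defines a germ of singular holomorphic foliation $\fa$ whose linear part, after a complex linear diagonalization in coordinates $(u,v)$, is the resonant saddle $u\,\po_u - v\,\po_v$, the eigenvalue ratio being $i\be/(-i\be)=-1$. Its two smooth separatrices $\{u=0\}$ and $\{v=0\}$ are interchanged by the anti-holomorphic involution $\sigma$ induced by complex conjugation together with the real structure, the real plane $\re^2$ sitting inside $\bc^2$ as the locus associated with $\sigma$; the real orbits of $X$ are then the traces on $\re^2$ of complex leaves of $\fa$.

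The decisive step is the holonomy analysis. Fixing a transversal $\Sigma$ to the separatrix $\{v=0\}$ and following leaves along the loop in $\{v=0\}\setminus\{0\}$ encircling $0$ produces the holonomy diffeomorphism $h\colon(\bc,0)\to(\bc,0)$, which is tangent to the identity because of the $-1$ resonance. The heart of the argument is to show that the center condition forces $h$ to be of finite order: the closing-up of every real orbit of $X$ must translate, through the correspondence between real periodic orbits and the return map of $\fa$ along the conjugate pair of separatrices, into finiteness of the monodromy of $\fa$; being moreover tangent to the identity, such an $h$ can only be $\Id$ (and, in general, finite-order germs are analytically linearizable by averaging over their cyclic group). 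I expect this passage — pinning down exactly how the real center yields finite-order monodromy of the complexified foliation — to be the main obstacle, since it is precisely where the real and complex dynamics must be matched.

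Once $h$ is known to be trivial (hence linearizable), I would invoke the Mattei--Moussu theory: for such resonant saddles the analytic conjugacy class of the holonomy of a separatrix determines that of the foliation, so a trivial holonomy forces $\fa$ to be analytically linearizable and therefore to admit the holomorphic first integral $F=uv$, a holomorphic Morse function. Finally I would descend to the real setting: $F$ can be chosen $\sigma$-invariant, so its restriction to $\re^2$ yields a real analytic function $f$ with a Morse singularity at $0$, and comparing $df$ with $\om$ — both defining the same foliation with isolated singularity — gives $\om = g\,df$ for a real analytic unit $g$ with $g(0)\neq 0$, as required.
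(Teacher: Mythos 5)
Your overall strategy --- dualize to a vector field, pin the linear part to a rotation, complexify to a resonant saddle, show the holonomy is finite, and invoke Mattei--Moussu --- is exactly the Moussu route that the paper adopts for this statement (it cites Moussu's geometric proof and implements the same scheme in its Lemma~\ref{Lemma:orbitsperiodic}). But your attempt has a genuine gap precisely at the step you yourself flag as ``the main obstacle'': you never establish that the center condition forces the holonomy $h$ to have finite order, and the direct route you sketch cannot work as stated. In the diagonalizing coordinates $(u,v)=(x+iy,\,x-iy)$ the real plane meets the complex separatrices $\{u=0\}$ and $\{v=0\}$ only at the origin, so a closed real orbit produces no invariant set on a transversal to a single separatrix. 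Worse, on $\mathbb{R}^2$ one has $u=re^{i\theta}$ and $v=re^{-i\theta}$, so a real orbit links the two separatrices with indices $(+1,-1)$: the real return map of the center is computed by a loop winding around \emph{both} separatrices, not by the holonomy of a loop inside one separatrix, so ``finiteness of the monodromy'' cannot be read off from the separatrix holonomy in the way you propose.

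The missing device is the blow-up at $0\in\mathbb{C}^2$. The real plane lifts to a Moebius band $M^2$ through the equator $\gamma$ of the exceptional divisor $E$; the center orbits become closed leaves of the pulled-back foliation on $M^2$ near $\gamma$; and $\gamma$ generates the projective holonomy $h\colon(\Sigma,p)\to(\Sigma,p)$ of $E$, with linear part $e^{i\pi}=-1$ (this, not the tangent-to-identity separatrix holonomy, is the map that the real Poincar\'e return map sees). Closedness of the leaves on $M^2$ then yields a real-analytic $h$-invariant curve $\Gamma=M^2\cap\Sigma$ on which the pseudo-orbits of $h$ are closed; since $h^2$ is tangent to the identity, the Leau--Camacho topological description of tangent-to-identity germs (the paper's Lemma~\ref{Lemma:finite}) forces $h^2=\mathrm{id}$, so $h$ is periodic of period two. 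From that point on your remaining steps are sound and match the paper: Mattei--Moussu converts the finite holonomy into a holomorphic first integral of the form you describe, and the descent to $\mathbb{R}^2$ (a conjugation-invariant choice of first integral, Morse lemma for the quadratic jet, and comparison of $df$ with $\omega$ to get $\omega=g\,df$, $g(0)\neq 0$) goes through. Your observation that the separatrix holonomy is tangent to the identity because of the $-1$ resonance is correct but unusable on its own: without the blow-up you have no invariant real curve with closed pseudo-orbits on which to apply the rigidity of tangent-to-identity germs.
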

There are some equivalent statements also in terms of vector fields.
Besides the classical analytical proofs, there is a quite geometrical proof given by Moussu
\cite{moussu}. In his paper he makes use of the complexification of the one-form,
obtaining therefore a holomorphic one-form with a suitable singularity at the origin $0\in \mathbb C^2$. Moussu's approach strongly relies on the Mattei-Moussu theorem (\cite{mattei-moussu}), about topological (dynamical) conditions assuring the existence of
holomorphic first integrals for germs of holomorphic foliations near a singular point (Theorem~B page 473). The center condition then together with Mattei-Moussu theorem above mentioned, assures the existence of a first integral for  the complexification and therefore for the
real analytic one-form. Moussu's ideas are quite attractive and inspiring.
They also show the interplay between real analytic dynamical systems and
geometric theory of holomorphic foliations.

In this paper we address problems motivated by the above statement.
Our first result in this direction reads as follows:

\begin{Theorem}
\label{Theorem:newcenter}
For a given germ of a real analytic one-form $\omega= a(x,y)dx + b(x,y)dy$
at the origin $0\in \mathbb R^2$,  having an isolated singularity for its first jet $\omega_1$, the following conditions are equivalent for the induced foliation germ $\fa: \, \omega=0$:

\begin{enumerate}[(i)]
\item The leaves of  $\fa$ are closed in some small neighborhood of the origin.
\item The origin is a center singularity.
\item There is a real analytic first integral.
\end{enumerate}
\end{Theorem}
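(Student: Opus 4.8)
The plan is to establish the cycle of implications $(ii)\Rightarrow(iii)\Rightarrow(i)\Rightarrow(ii)$. The arrow $(ii)\Rightarrow(iii)$ is nothing but the classical Lyapunov--Poincar\'e statement, Theorem~\ref{Theorem:centertheorem}, which under the center hypothesis already yields a strong real analytic first integral $\om=g\,df$ with $g(0)\ne0$; so I would simply quote it. The substantive content is to recover the first integral out of the purely dynamical information in $(i)$, and for this I would follow Moussu \cite{moussu} and pass to the complexification, where the Mattei--Moussu theorem \cite{mattei-moussu} becomes available.

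First, the two elementary arrows that close the cycle. For $(iii)\Rightarrow(i)$: a nonconstant real analytic first integral $f$ confines each leaf of $\fa$ to a fibre $\{f=c\}$, and since $\om_1$ has an isolated singularity the origin is an isolated critical point of $f$; after shrinking the neighborhood the regular fibres are compact analytic curves whose connected components are precisely the leaves, and these do not accumulate at the origin, hence are closed. For $(i)\Rightarrow(ii)$: the assumption that $\om_1$ has an isolated singularity means the linear part is nondegenerate, so the real germ is of node, focus, saddle or center type. In the first three cases some leaf (a nodal trajectory, a spiralling leaf, or a separatrix) has the origin in its closure and is therefore not closed in any full neighborhood of $0$; thus closedness of every leaf forces the center configuration of nested closed curves around $0$, which is $(ii)$. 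Here I would invoke an elementary index / Poincar\'e--Bendixson argument in the plane to see that the family of closed leaves is a nest of circles surrounding the singularity.

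The heart of the proof, and the step that exhibits the method advertised in the introduction, is an intrinsic route $(i)\Rightarrow(iii)$ through complexification. I would complexify $\om$ to a holomorphic one-form $\om^{\bc}$ on a neighborhood of $0\in\bc^2$; the nondegeneracy of $\om_1$ guarantees that the induced holomorphic foliation $\fa^{\bc}$ has an isolated singularity with nondegenerate linear part, and the center configuration fixes the eigenvalue ratio to be $-1$, a resonant situation. The key observation is that the real first-return (holonomy) map of $\fa$ along a small real periodic leaf is the identity, precisely because the real leaves are closed; by analytic continuation its complexification, the holonomy of $\fa^{\bc}$ along the corresponding loop, is then the identity as well. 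Triviality of this holonomy, together with the finiteness of the separatrix set coming from the isolated singularity, places us exactly in the hypotheses of the Mattei--Moussu theorem \cite{mattei-moussu}, which produces a holomorphic first integral $F$ for $\fa^{\bc}$. Restricting $F$ to $\re^2$ and taking a nonconstant real or imaginary part yields the desired real analytic first integral, giving $(iii)$ a second time.

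I expect the main obstacle to be the holonomy correspondence in the third paragraph: rigorously identifying the real first-return map of the center with the restriction to the real transversal of the complex holonomy of $\fa^{\bc}$, and checking that its triviality is exactly what Mattei--Moussu requires, namely closed complex leaves together with finitely many separatrices. One must verify that the loop realizing the real period generates the relevant element of the fundamental group of the leaf of $\fa^{\bc}$, and that the descent of $F$ back to $\re^2$ is genuinely nonconstant and real analytic. The nondegeneracy of $\om_1$ and the resonant eigenvalue ratio $-1$ are what make both the analytic continuation of the holonomy and the application of \cite{mattei-moussu} legitimate.
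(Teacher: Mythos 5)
Your cycle $(ii)\Rightarrow(iii)\Rightarrow(i)\Rightarrow(ii)$ does close, and its decisive arrow $(i)\Rightarrow(ii)$ is proved by a genuinely different and more elementary route than the paper's. The paper obtains $(i)\Rightarrow(ii)$ from Lemma~\ref{Lemma:orbitsperiodic}: complexify, note the Siegel $1:(-1)$ singularity $d(xy)+\ldots=0$, blow up, observe that $\mathbb R^2$ lifts to a Moebius band through the equator of the exceptional divisor, take the holonomy $h$ of the equator (the projective holonomy group is cyclic, the only singularities on the divisor being the two poles), use closedness of the pseudo-orbits of $h$ on the invariant real analytic curve together with the Camacho--Leau topological description \cite{camacho,Leau} (Lemma~\ref{Lemma:finite}) to conclude $h^2=\Id$, invoke Mattei--Moussu \cite{mattei-moussu} for a holomorphic first integral, descend it by Lemma~\ref{Lemma:complexificationfirstintegral}, and only then extract the center from the Morse lemma. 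You instead stay in the real plane: classify the nondegenerate germ as node, focus, saddle or center, exhibit a non-closed leaf in the first three cases, and in the elliptic case use monotonicity of the analytic return map to force it to be the identity. That is legitimate and shorter for Theorem~\ref{Theorem:newcenter} taken in isolation, and it has the merit of making explicit a reduction the paper leaves tacit, namely that $(i)$ rules out the non-rotation linear parts allowed by the literal hypothesis before Lemma~\ref{Lemma:orbitsperiodic} (which assumes the rotation jet) can be applied. What the paper's heavier route buys is the holomorphic first integral of the complexification and the blow-up/holonomy machinery, which are reused in Corollary~\ref{Corollary:vfcenter} and in the higher-dimensional Theorem~\ref{Theorem:reebtype}.

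The paragraph you call the heart --- the ``intrinsic'' complexified route $(i)\Rightarrow(iii)$ --- is, as written, gapped at exactly the two points the paper's argument is engineered around. First, under $(i)$ alone you may not speak of ``a small real periodic leaf'': closed leaves need not be periodic, a distinction the paper flags expressly (``this does not mean that the trajectories of $X$ are periodic'') and resolves with Lemma~\ref{Lemma:finite}, upgrading closed pseudo-orbits on an invariant analytic curve to triviality of the tangent-to-identity iterate $h^2$. Second, even granting an identity return map, triviality of the holonomy along one real cycle (a loop in a single complex leaf) does not by itself put you in the hypotheses of Mattei--Moussu; you flag this (``one must verify that the loop\ldots generates'') but do not resolve it, whereas the paper does so by blowing up: after the blow-up the relevant group is the holonomy of the punctured divisor, cyclic and generated by the equator loop, so periodicity of that one generator suffices. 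In your overall structure this paragraph is logically redundant --- your planar argument already yields $(ii)$, and Theorem~\ref{Theorem:centertheorem} yields $(iii)$ --- so the gap does not sink the proposal, but the advertised intrinsic route is not complete as sketched. A last caveat on your $(iii)\Rightarrow(i)$: compactness of the regular fibres is not automatic from an isolated critical point (a saddle-type jet gives $f=x^2-y^2$, whose separatrix leaves are not closed); like the paper, you must read the hypothesis as center-type linear part and get $f=x_1^2+x_2^2+\mathrm{h.o.t.}$ plus the Morse lemma, as in the end of the proof of Lemma~\ref{Lemma:orbitsperiodic}.
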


Clearly, in view of Theorem~\ref{Theorem:centertheorem}, the main point is $(i) \implies (ii)$.
Theorem~\ref{Theorem:newcenter} above may look like a too small improvement in the classical statement of Lyapunov-Poincaré. Nevetheless, its applications prove its usefulness. 

\vglue.1in
In the course of the proof of Theorem~\ref{Theorem:newcenter} we shall obtain (cf. Lemma~\ref{Lemma:orbitsperiodic}):
\begin{Corollary}
\label{Corollary:vfcenter}

For a germ  at $0 \in \mathbb R^2$ of a  real analytic vector field $X$ having  first jet  of the  form $X_1 = x_2 \frac{\partial}{\partial x_1} - x_1 \frac{\partial}{\partial x_2}$,
 the following conditions are equivalent:

\begin{enumerate}[{(i)}]

\item The orbits of $X$ are closed subsets in some neighborhood of the origin.
\item $X$ has a center type singularity at the origin.
\item $X$ admits a real analytic first integral.
\item $X$ is analytically almost linearizable, i.e., $X$ is a multiple of a linear vector field after an analytic local change of coordinates.
\end{enumerate}
\end{Corollary}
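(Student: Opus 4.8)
The plan is to reduce the first three conditions to Theorem~\ref{Theorem:newcenter} by passing to the one-form dual to $X$, and then to graft condition $(iv)$ onto the resulting block of equivalences by means of the analytic Morse lemma.

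First I would introduce the one-form $\omega$ whose kernel is the orbit foliation of $X$. Writing $X = P\,\partial/\partial x_1 + Q\,\partial/\partial x_2$ and setting $\omega = -Q\,dx_1 + P\,dx_2$, one has $\omega(X)=0$, so $\fa\colon \omega=0$ is precisely the foliation whose leaves are the orbits of $X$. The hypothesis $X_1 = x_2\,\partial/\partial x_1 - x_1\,\partial/\partial x_2$ yields $\omega_1 = x_1\,dx_1 + x_2\,dx_2$, whose only singularity is the origin; hence $\omega$ meets the standing hypothesis of Theorem~\ref{Theorem:newcenter}. Now ``orbits of $X$ closed'' is the same as ``leaves of $\fa$ closed'', a center for $X$ is a center for $\fa$, and a real analytic first integral of $X$ (a function $f$ with $X(f)=0$) is exactly a real analytic first integral of $\fa$. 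Thus Theorem~\ref{Theorem:newcenter} delivers $(i)\Leftrightarrow(ii)\Leftrightarrow(iii)$ directly.

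It then remains to weave in $(iv)$, which I would do by proving $(iii)\Rightarrow(iv)$ and $(iv)\Rightarrow(iii)$. For $(iii)\Rightarrow(iv)$: by Theorem~\ref{Theorem:centertheorem} the first integral $f$ is Morse, and at a center it is definite, so the analytic Morse lemma puts it in the form $f = u_1^2+u_2^2$ in suitable analytic coordinates $(u_1,u_2)$. Writing $X = P\,\partial/\partial u_1 + Q\,\partial/\partial u_2$, the identity $X(f)=0$ becomes $u_1 P + u_2 Q = 0$; since the ring of convergent power series is a UFD and $u_1,u_2$ are coprime, this forces $P = u_2\,h$ and $Q = -u_1\,h$ for a single analytic germ $h$, that is $X = h\cdot L$ with $L = u_2\,\partial/\partial u_1 - u_1\,\partial/\partial u_2$ linear. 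Finally I must check $h(0)\ne 0$: the linear part of $X$ is conjugate to the original rotation (conjugation preserves eigenvalues $\pm i$), hence invertible; but the linear part of $h\cdot L$ equals $h(0)\,M_L$ with $M_L$ the invertible matrix of $L$, so $h(0)\neq 0$, and $X$ is a nonvanishing analytic multiple of a linear field.

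For the converse $(iv)\Rightarrow(iii)$, suppose $\phi_*X = u\cdot L'$ in some analytic chart, with $L'$ linear and $u(0)\neq0$. The linear part of $\phi_*X$ equals $u(0)\,M'$, where $M'$ is the matrix of $L'$; on the other hand it is conjugate to the original rotation, so its eigenvalues are $\pm i$. Hence $M'$ has the purely imaginary eigenvalues $\pm i/u(0)$, so $L'$ is a linear center admitting a quadratic first integral $Q$, and $X(Q) = u\,L'(Q)=0$ exhibits an analytic first integral for $X$, giving $(iii)$. Assembling the two implications with the block $(i)\Leftrightarrow(ii)\Leftrightarrow(iii)$ closes all four equivalences. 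I expect the only delicate point to be the bookkeeping that forces the multiplier $h$ (resp. $u$) to be nonzero at the origin, which is exactly what upgrades a mere tangency statement to a genuine ``almost linearization''; everything else is a faithful transcription of Theorem~\ref{Theorem:newcenter} together with the Morse normal form.
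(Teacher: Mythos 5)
Your proof is correct, but its architecture differs from the paper's. The paper proves the single cycle $(i)\Rightarrow(ii)\Rightarrow(iii)\Rightarrow(iv)\Rightarrow(i)$: $(i)\Rightarrow(ii)$ is exactly Lemma~\ref{Lemma:orbitsperiodic}, $(ii)\Rightarrow(iii)$ is Theorem~\ref{Theorem:centertheorem}, $(iii)\Rightarrow(iv)$ is attributed in one line to the classical Morse lemma, and the cycle is closed by $(iv)\Rightarrow(i)$, observing that the linear field admits the first integral $x_1^2+x_2^2$, so the orbits are closed. You instead dualize $X$ to the one-form $\omega=-Q\,dx_1+P\,dx_2$ and quote Theorem~\ref{Theorem:newcenter} wholesale for the block $(i)\Leftrightarrow(ii)\Leftrightarrow(iii)$ --- legitimate, since that theorem precedes the corollary and its proof does not rely on it (the paper itself routes through Lemma~\ref{Lemma:orbitsperiodic}, which is the engine of Theorem~\ref{Theorem:newcenter}, so the content is the same) --- and then graft $(iv)$ on by a two-way bridge $(iii)\Leftrightarrow(iv)$. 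Your bridges supply details the paper compresses: for $(iii)\Rightarrow(iv)$, the UFD division $u_1P+u_2Q=0\Rightarrow X=h\cdot L$ together with the linear-part computation forcing $h(0)\neq 0$ makes precise what ``classical Morse lemma'' is doing; for the converse you argue $(iv)\Rightarrow(iii)$ via the eigenvalue bookkeeping (the multiplier forces $M'$ to have eigenvalues $\pm i/u(0)$, hence $M'$ is traceless and admits a definite quadratic first integral), whereas the paper closes via $(iv)\Rightarrow(i)$ instead; both closings are valid, and yours has the merit of spelling out why the linear model appearing in $(iv)$ is necessarily a linear center. One wording caution in your $(iii)\Rightarrow(iv)$: Theorem~\ref{Theorem:centertheorem} does not assert that an arbitrary first integral is Morse (if $f$ is a first integral, so is $f^2$); it \emph{furnishes} a decomposition $\omega=g\,df$ with $f$ Morse. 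So you should discard your given first integral and use the one produced by Theorem~\ref{Theorem:centertheorem} --- which is permissible at that stage, since $(iii)\Rightarrow(ii)$ is already established by your block, so the center hypothesis of Theorem~\ref{Theorem:centertheorem} is available, and then $X(f)=0$ follows from $g\,df(X)=\omega(X)=0$ with $g(0)\neq 0$; the rest of your argument goes through verbatim.
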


 Our next result deals with  higher dimension versions of Theorem~\ref{Theorem:centertheorem}.

\begin{theorem}
\label{Theorem:reebtype}
Let $\fa$ be a real analytic codimension one singular foliation given in a neighborhood $U\subset \mathbb R^n$ of the origin
$0 \in \mathbb R^n, \,  n \geq 3$ by an integrable one-form $\omega$ having first jet of the form
$\omega_1= d(\sum\limits_{j=1}^r x_j ^2 ), \, 2\leq r \leq n$.
Then $\fa$ admits an analytic first integral in some neighborhood of the origin in the following situations:
\begin{enumerate}[{(i)}]
\item If $r=2$ and the leaves of $\fa$ are closed  in some neighborhood of the origin.

\item If $ 3 \leq r \leq n$.

\end{enumerate}
\end{theorem}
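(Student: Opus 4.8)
The plan is to complexify and transfer the problem to the setting of holomorphic foliations, treating the two cases by different tools according to the rank $r$. First I extend $\om$ to a germ of holomorphic integrable one-form $\widetilde{\om}=\sum_{j=1}^n a_j(z)\,dz_j$ on a neighborhood of $0\in\mathbb{C}^n$, with the same first jet $\widetilde{\om}_1=d(\sum_{j=1}^r z_j^2)$; thus $a_j=2z_j+(\text{order}\ge 2)$ for $1\le j\le r$ and $a_j=(\text{order}\ge 2)$ for $r<j\le n$. Writing $Q=\sum_{j=1}^r z_j^2$, the form $\widetilde{\om}$ is an integrable perturbation of the exact form $dQ$, which is precisely the situation of the Reeb-type rigidity problems. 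In each case I will produce a holomorphic first integral $F=Q+(\text{h.o.t.})$ with $\widetilde{\om}=g\,dF$ for a holomorphic $g$, and then restrict to $\mathbb{R}^n$. Since the leading term of $F|_{\mathbb{R}^n}$ is $\sum_{j=1}^r x_j^2\not\equiv 0$, this restriction is a non-constant real analytic first integral for $\fa$, which is what is required.

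For case (ii), $3\le r\le n$, the key point is the codimension of the singular set. The first $r$ coefficients $a_1,\dots,a_r$ have linearly independent differentials $2\,dz_1,\dots,2\,dz_r$ at the origin, so by the holomorphic implicit function theorem $V(a_1,\dots,a_r)$ is a smooth submanifold of codimension $r$; since $\operatorname{Sing}(\widetilde{\om})\subseteq V(a_1,\dots,a_r)$, the set $\operatorname{Sing}(\widetilde{\om})$ has codimension $\ge r\ge 3$. Hence the Frobenius-with-singularities theorem of Malgrange (in the spirit of Reeb's stability for $n\ge 3$) applies and yields a holomorphic first integral, i.e.\ a holomorphic $f$ with $\widetilde{\om}\wedge df=0$ and $\widetilde{\om}=g\,df$ for some holomorphic $g$. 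Comparing lowest-order terms forces $f=cQ+(\text{h.o.t.})$ with $c\ne 0$, and after rescaling one obtains $F=Q+(\text{h.o.t.})$ as above; restricting to $\mathbb{R}^n$ finishes this case.

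For case (i), $r=2$, the singular set has codimension only $2$, so Malgrange's theorem is unavailable and the dynamical hypothesis must be used; here I reduce to the two-dimensional Theorem~\ref{Theorem:newcenter}. After an analytic change of coordinates I may assume $\operatorname{Sing}(\om)\subseteq\Sigma:=\{x_1=x_2=0\}$, and for $p$ near $0$ I consider the two-dimensional slices $T_p=\{x_3=p_3,\dots,x_n=p_n\}$. The trace foliation $\fa|_{T_p}$ has first jet $d(x_1^2+x_2^2)$, an isolated singularity of its first jet of center type, and its leaves are the connected components of $T_p\cap L$ for leaves $L$ of $\fa$; hence the closedness of the leaves of $\fa$ passes to $\fa|_{T_p}$. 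Theorem~\ref{Theorem:newcenter} then gives a center and a real analytic first integral $f_p=x_1^2+x_2^2+(\text{h.o.t.})$ on each slice, and it remains to assemble these into a single analytic first integral $F$ on $U$.

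The main obstacle is concentrated in case (i): verifying the codimension bound and quoting Malgrange's theorem in case (ii) is essentially formal, whereas in case (i) the delicate step is the propagation and patching, namely proving that the transversal first integrals $f_p$ depend analytically on $p$ and glue, along the non-isolated singular set $\Sigma$, into one real analytic function on $U$. This is exactly where the hypothesis that the leaves are closed is indispensable: it forces the holonomy of $\fa$ transverse to $\Sigma$ to be finite (the reason the two-dimensional center theorem applies slice by slice), and this finiteness is what removes the obstruction to globalizing the first integral.
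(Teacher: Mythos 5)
Your case (ii) is correct, but it takes a genuinely different route from the paper. The paper never invokes Malgrange: it disposes of $r=n$ by Reeb's theorem (Theorem~\ref{Theorem:Reeblinearization}), and for $r<n$ it cuts by a generic linear hyperplane, inducts down to dimension $r$, and then climbs back up using the Mattei--Moussu extension theorem of \cite{mattei-moussu} (a codimension one holomorphic germ admits a holomorphic first integral as soon as its restriction to a generic linearly embedded hyperplane does), together with Lemma~\ref{Lemma:complexificationfirstintegral} to pass between the real and complex settings. Your computation is right: since $a_j=2z_j+\hot$ for $j\le r$, the differentials $da_1,\ldots,da_r$ are independent at $0$, so $\operatorname{Sing}(\widetilde{\om})\subseteq V(a_1,\ldots,a_r)$ has codimension $\ge r\ge 3$, and Malgrange's singular Frobenius theorem yields $\widetilde{\om}=g\,df$ with $g(0)\ne 0$ in one stroke, uniformly in $3\le r\le n$ (it even re-proves the Reeb case $r=n$). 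This is cleaner than the paper's induction and gives the first integral in the strong sense $g\,df$. One small repair: $f\big|_{\mathbb R^n}$ need not be real-valued, since the higher-order coefficients of $f$ may be complex; take its real part, or invoke Lemma~\ref{Lemma:complexificationfirstintegral}, noting that the quadratic part remains $\sum_{j=1}^r x_j^2$ after your normalization.

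Case (i), however, has a genuine gap. You slice by the one-parameter family of parallel planes $T_p$ and obtain, via Theorem~\ref{Theorem:newcenter}, a first integral $f_p$ on each slice, but then the entire content of the theorem is deferred to the patching step, which you do not carry out: you assert that the $f_p$ depend analytically on $p$ and glue along $\Sigma$ because closed leaves ``force the holonomy transverse to $\Sigma$ to be finite.'' Each $f_p$ is well defined only up to post-composition with a germ of diffeomorphism of $(\mathbb R,0)$, so producing a coherent analytic choice across slices is precisely the hard point, and the finiteness slogan is not an argument (it is also unclear what holonomy transverse to $\Sigma$ means at points of $\Sigma$ that are genuine singularities of $\fa$; note moreover that on slices through $p\in\Sigma$, $p\ne 0$, the restricted first jet is only a perturbation of $d(x_1^2+x_2^2)$, so applying Theorem~\ref{Theorem:newcenter} there needs a word as well). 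The paper avoids gluing altogether: it restricts to a \emph{single} generic $2$-plane $E$ through the origin (closedness of leaves passes to $E$), applies Theorem~\ref{Theorem:newcenter} on $E$, complexifies, and then invokes the Mattei--Moussu extension theorem to promote the holomorphic first integral on the generic $\mathbb C^2\subset\mathbb C^3$ to one for $\fa_{\mathbb C}$ on $\mathbb C^3$, concluding by Lemma~\ref{Lemma:complexificationfirstintegral}, with $n\ge 4$ handled by induction on generic hyperplane sections. That extension theorem is exactly the tool your argument is missing; with it, the family $\{f_p\}$ and the gluing problem become unnecessary.
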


\begin{Remark}
{\rm
In both cases we have:

\begin{enumerate}[{\rm(a)}]
\item $\fa$ admits an analytic linearization, i.e.,  $\fa$ is given by $d(\sum\limits_{j=1}^r (\tilde x_j)^2)=0$ in suitable analytic coordinates $(\tilde x_1,\ldots,\tilde x_n)$.
\item The leaves of $\fa$ are closed diffeomorphic to the cylinder $S^{r-1}\times \mathbb R^{n-r}$ in some neighborhood of the origin.

\end{enumerate}

}\end{Remark}

We observe that Theorem~\ref{Theorem:reebtype} can be seen as a version of a classical theorem of
Reeb, for the case of degenerate quadratic forms in the first jet of the integrable one-form.
Let us spend a few words about it.
First we recall the above mentioned classical result due to
G. Reeb \cite{Reeb} (see also \cite{C-LN} page 85):
\begin{Theorem}[Reeb, \cite{Reeb}]
\label{Theorem:Reeblinearization} Let $\omega$ be an analytic integrable 1-form
defined in a neighborhood of the origin $ 0 \in \mathbb R^n, n \geq
3$. Suppose that $\omega(0)=0$ and $\omega$ has a non-degenerate
linear part $\omega_1 = df$, i.e., $f$ is a quadratic form of
maximal rank (not necessarily of center type). Then there exist an
analytic diffeomorphism $h \colon (\mathbb R^n,0) \to (\mathbb
R^n,0)$ and an analytic function $g \colon (\mathbb R^n,0) \to
(\mathbb R,0)$ with $h^*(\omega)=gdf$.
\end{Theorem}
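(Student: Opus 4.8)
The plan is to pass to the complexification and split the statement into two classical ingredients: the existence of a holomorphic (hence real analytic) first integral for a codimension-one integrable germ whose singular set has codimension at least three, and the analytic Morse lemma. View $\omega$ as the real trace of a germ of holomorphic integrable $1$-form at $0\in\mathbb{C}^n$. Since the quadratic form $f$ has maximal rank, $df=\omega_1$ vanishes only at the origin, so the singular set of $\omega$ is $\{0\}$, of codimension $n\geq 3$; this is what makes the whole argument run.

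The core step is to produce an analytic integrating factor, that is a germ $\mu$ with $\mu(0)\neq 0$ and $d(\mu\,\omega)=0$; by the Poincar\'e lemma this gives $\mu\,\omega=dF$ for an analytic $F$ with $F(0)=0$, i.e. $\omega=g\,dF$ with $g=1/\mu$ and $g(0)\neq 0$. To find $\mu$ I would first rewrite the integrability condition $\omega\wedge d\omega=0$ as $d\omega=\eta\wedge\omega$, the division of $d\omega$ by $\omega$ being legitimate because the singular set has codimension $\geq 3$ (de Rham division theorem). The equation $d(\mu\,\omega)=0$ then becomes the cohomological equation $d\log\mu=-\eta+\phi\,\omega$, and one must choose the free factor $\phi$ so that the right-hand side is closed. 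Differentiating $d\omega=\eta\wedge\omega$ yields $d\eta\wedge\omega=0$, hence $d\eta=\theta\wedge\omega$ by division again, and the solvability of the resulting first-order linear equation for $\phi$ is governed by a cohomology class that vanishes precisely under the codimension $\geq 3$ hypothesis. This is the content of Malgrange's singular Frobenius theorem, which I would either invoke directly or reprove by the formal-plus-convergence scheme: one solves the homological equations order by order, the nondegeneracy of $f$ guaranteeing solvability at each degree because the partials $\partial f/\partial x_1,\dots,\partial f/\partial x_n$ form a regular sequence (so the associated Koszul complex is exact), and one then upgrades the formal solution to a convergent one by a standard majorant estimate. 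I expect this existence step to be the main obstacle; everything afterwards is routine.

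Granting $\omega=g\,dF$ with $g(0)\neq 0$, comparing degree-one parts gives $g(0)\,dF_2=df$, so the quadratic part of $F$ equals $f/g(0)$, which is nondegenerate; hence $F$ has a nondegenerate critical point at the origin with $F(0)=0$. The analytic Morse lemma then provides an analytic local diffeomorphism $h_0$ of $(\mathbb{R}^n,0)$ with $F\circ h_0=Q$, where $Q$ is the standard nondegenerate quadratic form whose signature is that of $f/g(0)$. Consequently $h_0^{*}\omega=(g\circ h_0)\,dQ$ with $(g\circ h_0)(0)\neq 0$.

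It remains to match the foliations defined by $Q$ and by $f$. The signature of $Q$ equals that of $f$ or of $-f$ according to the sign of $g(0)$, so in either case there are a constant $c\neq 0$ and a linear isomorphism $A$ with $Q\circ A=c\,f$; then $A^{*}dQ=d(c\,f)=c\,df$. Setting $h=h_0\circ A$ we obtain $h^{*}\omega=c\,(g\circ h)\,df=\tilde g\,df$ with $\tilde g(0)=c\,g(0)\neq 0$, which is the asserted normal form. The passage back to the real domain is immediate since all the objects constructed are real analytic.
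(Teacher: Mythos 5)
Your proof targets a statement that the paper itself does not prove: Theorem~\ref{Theorem:Reeblinearization} is quoted as a classical result, with references to \cite{Reeb} and to \cite{C-LN} (page 85), and is then used as an ingredient in the proof of Theorem~\ref{Theorem:reebtype}. So there is no internal argument to compare yours against; judged on its own, your proposal is correct. Its structure is the standard modern route: nondegeneracy of $f$ forces $\sing(\omega)=\{0\}$, of codimension $n\geq 3$; Saito--de Rham division legitimizes writing $d\omega=\eta\wedge\omega$; and the existence of the integrating factor, which you rightly single out as the only nonroutine point, is precisely the codimension-$\geq 3$ case of Malgrange's singular Frobenius theorem (B.~Malgrange, \emph{Frobenius avec singularit\'es, 1. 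Codimension un}, Publ. Math. IHES 46 (1976)) --- note that this reference is not in the paper's bibliography, so you would have to supply it rather than cite it. Your alternative sketch (Koszul exactness for the regular sequence $\partial f/\partial x_1,\dots,\partial f/\partial x_n$ giving order-by-order solvability, then a majorant argument for convergence) is essentially Reeb's original scheme, so either branch is legitimate. The endgame is also handled correctly: $dF(0)=0$ because $g(0)\neq 0$, comparison of linear parts gives that the quadratic part of $F$ is $f/g(0)$, the analytic Morse lemma yields $F\circ h_0=Q$, and Sylvester's law recovers the given $f$ up to the constant $c=1/g(0)$, whose sign you correctly track through the signature discussion. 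Two minor points. First, if you obtain Malgrange's theorem by complexifying, the claim that the return to the real domain is ``immediate'' glosses a (standard but nonempty) reality argument: one must check that $F$ and $g$ can be chosen real, e.g.\ by averaging with the antiholomorphic involution fixing $\mathbb R^n$, or by invoking the real analytic form of Malgrange's theorem directly. Second, the statement's ``$g\colon(\mathbb R^n,0)\to(\mathbb R,0)$'' is a slip in the paper --- $g(0)\neq 0$ is what is meant, as in Theorem~\ref{Theorem:centertheorem}, and that is what your construction delivers.
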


The above theorem has some version for $\omega$ of class $C^2$ but
demanding that the singularity is of center type (see \cite{C-LN} page 84 or
\cite{Reeb}).
We point-out that part (ii) in our Theorem~\ref{Theorem:reebtype} extends Reeb's theorem (Theorem~\ref{Theorem:Reeblinearization})  above to the case of degenerate center singularities.

In \cite{Cerveau-Scardua2} the authors  consider some versions of Reeb's theorem above.
They work with holomorphic integrable 1-forms of type $\Omega=
dP + \Omega^\prime$ where $P$ is a homogeneous irreducible
polynomial, and $\Omega^\prime$ is a 1-form of higher order terms
than $dP$. Under some regularity hypotheses on $P$ they also conclude that
$\Omega$ admits a first integral which is a perturbation of $P$.
This includes for instance the case $P=\sum\limits_{j=1}^n x_j ^d,
\, n \geq 3, d \geq 2$ a so called {\it Pham polynomial}. Given a
polynomial $P \in \mathbb R[x_1,\ldots,x_n]$ we denote by $P^{\mathbb
C} \in \mathbb C[z_1,\ldots,z_n]$ its complexification where $z_j= x_j
+ \sqrt{-1} y_j$.
The above is the main motivation for our  next result that reads as follows:

\begin{theorem}
\label{Theorem:reebtypepham}
Let $\fa$ be a real analytic codimension one singular foliation given in a neighborhood $U\subset \mathbb R^n$ of the origin
$0 \in \mathbb R^n, \,  n \geq 3$ by an integrable one-form $\omega= dP_{r,n,d}  + P_{r,n,d} \, {\tilde \omega}$ where $P_{r,n,d}$ is the truncated Pham polynomial $P_{r,n,d}=\sum\limits_{j=1}^r x_j ^d, \,   3 \leq r \leq n $ and ${\tilde \omega}$ is an analytic one-form. If  $d=p^s$ for some prime number $p\in \mathbb N$
then $\fa$ admits an analytic first integral in some neighborhood of the origin.

\end{theorem}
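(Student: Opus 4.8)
The plan is to follow Moussu's strategy and pass to holomorphic foliations. First I would complexify: extend the real analytic coefficients of $\omega$ to a holomorphic integrable one-form on a neighborhood of $0\in\mathbb{C}^n$, which by a harmless abuse of notation I keep writing as $\Omega=dP+P\tilde\omega$ with $P=\sum_{j=1}^{r}z_j^{d}$ and $z_j=x_j+\sqrt{-1}\,y_j$. A holomorphic first integral $F$ of $\Omega$ restricts on the real slice $\{y=0\}\cong\mathbb{R}^n$ to a real analytic first integral of $\omega$, so it suffices to produce $F$. I would then record two structural facts that drive everything: (a) the hypersurface $\{P=0\}$ is $\Omega$-invariant, since $dP\wedge\Omega=P\,(dP\wedge\tilde\omega)$ is divisible by $P$; and (b) because $r\ge 3$, the Fermat--Pham polynomial $P$ is irreducible (its projectivization is a smooth, hence connected, hypersurface of positive dimension in $\mathbb{P}^{r-1}$). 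Thus $\{P=0\}$ is an irreducible invariant divisor and $\Omega=dP+\Omega'$ is an integrable perturbation of the exact homogeneous form $dP$ by the strictly higher-order term $\Omega'=P\tilde\omega$, which is exactly the setting of \cite{Cerveau-Scardua2}.

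The mechanism for integrating such a perturbation is to control the transverse dynamics along $\{P=0\}$. Dividing by $P$, the form $\eta:=\Omega/P=d\log P+\tilde\omega$ has a logarithmic pole along $\{P=0\}$ with residue $1$, and the integrability condition $\Omega\wedge d\Omega=0$ reduces to $\Omega\wedge d\tilde\omega=0$, whence $\eta$ is integrable with $d\eta=\eta\wedge\theta$ for a holomorphic one-form $\theta$. This is precisely the data of a foliation carrying a transverse structure along its invariant divisor, and producing a holomorphic first integral $F=P+\hot$ becomes equivalent to proving that the associated transverse holonomy (monodromy) representation is finite, hence periodic and linearizable.

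To make the holonomy computable I would reduce to dimension two by restricting $\Omega$ to a generic two-plane $\Pi$ through the origin, on which $P|_\Pi$ is a product of $d$ distinct lines; blowing up $0\in\Pi$ renders the exceptional $\mathbb{P}^1$ invariant (non-dicritical, since the roots are distinct) and meeting the $d$ separatrices in $d$ reduced singular points, each with holonomy of rotation number $-1/d$. The projective holonomy group $G$ is generated by these $d$ local holonomies subject to the relation that their product is the identity, and its finiteness is the whole point: the hard part, and the place where the hypothesis $d=p^{s}$ is indispensable, is to show that $G$ is periodic rather than harboring a resonant non-linearizable element. Here the arithmetic of the cyclic $p$-group $\mathbb{Z}/d\mathbb{Z}$ enters, since the residue-$1$ normalization fixes the rotation numbers at $-1/d$ and the unique-maximal-subgroup structure of a cyclic $p$-group forbids the resonant splitting that a composite $d$ would allow; I expect this finiteness to be the main obstacle of the entire argument. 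Once $G$ is known to be finite, the Mattei--Moussu theorem \cite{mattei-moussu} furnishes a holomorphic first integral of $\Omega|_\Pi$.

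Finally I would globalize. Because $r\ge 3$ the invariant divisor $\{P=0\}$ is irreducible of codimension one and the transverse monodromy is the same finite group for every generic plane, so the leafwise-constant function defined by these planar first integrals extends by analytic continuation, and across the singular locus $\{z_1=\dots=z_r=0\}$ of codimension $r\ge 3\ge 2$ by a Hartogs-type extension, to a single-valued holomorphic first integral $F=P+\hot$ of $\Omega$ on a full neighborhood of the origin; this is the conclusion of the perturbation results of \cite{Cerveau-Scardua2} once the finiteness of the holonomy is in hand. Restricting $F$ to $\mathbb{R}^n$ yields the desired real analytic first integral of $\omega$, completing the proof.
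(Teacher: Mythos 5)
Your opening moves coincide with the paper's: complexify, observe that $\omega_{\mathbb C}\wedge dP_{\mathbb C}$ is divisible by $P_{\mathbb C}$ so that $(P_{\mathbb C}=0)$ is invariant, note that the first homogeneous jet of $\omega_{\mathbb C}$ is $dP_{\mathbb C}$ (hence the foliation is non-dicritical, which the paper checks via the Euler identity $dP_{\mathbb C}(R)=d\cdot P_{\mathbb C}$), and use irreducibility of the Pham polynomial for $r\geq 3$. At that point the paper simply invokes the theorem of Cerveau--Loray \cite{Cerveau-Loray}: a non-dicritical integrable germ whose tangent cone is irreducible of degree $p^{s}$ admits a holomorphic first integral. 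You instead try to reprove the substance of that theorem, and this is where your argument has a genuine gap --- one you yourself flag when you write that the finiteness of the holonomy group is ``the main obstacle of the entire argument'' and then offer only a heuristic about the unique maximal subgroup of a cyclic $p$-group. That is not a proof, and as structured it cannot become one.

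The deeper problem is that your reduction to a generic two-plane $\Pi$ destroys exactly the hypothesis that makes the prime-power arithmetic work. On $\Pi$ the tangent cone $P|_{\Pi}=0$ degenerates into $d$ \emph{distinct points} of the exceptional $\mathbb{P}^{1}$, whose complement has free fundamental group with abelianization $\mathbb{Z}^{d-1}$; so the planar projective holonomy group $G$ is constrained only by the relation $h_{1}\cdots h_{d}=\mathrm{id}$ and by the linear parts $e^{-2\pi i/d}$, and such data do \emph{not} force finiteness: already one resonant, non-linearizable germ with linear part a primitive $d$-th root of unity generates an infinite group compatible with all these constraints (complete it to $d$ generators with product the identity). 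The finiteness in \cite{Cerveau-Loray} comes from global topology in the ambient dimension: for $n\geq 3$ the tangent cone is an \emph{irreducible} hypersurface of degree $d$ in $\mathbb{P}^{n-1}$, so $H_{1}$ of its complement is $\mathbb{Z}/d\mathbb{Z}$, the holonomy group of the exceptional divisor has abelianization a quotient of $\mathbb{Z}/p^{s}\mathbb{Z}$, and \emph{then} the elementary arithmetic of $p$-groups, combined with the filtration of $\mathrm{Diff}(\mathbb{C},0)$ by tangency order, yields periodicity. Your appeal to \cite{Cerveau-Scardua2} ``once the finiteness of the holonomy is in hand'' is therefore circular in effect: the finiteness is the whole theorem, and it must be run on the divisor $\mathbb{P}^{n-1}$ minus the irreducible cone, not on a generic plane section. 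If you retain your strategy, replace the planar holonomy step by a direct citation of \cite{Cerveau-Loray} (as the paper does), after which the restriction of the holomorphic first integral to the real slice, via the paper's Lemma~\ref{Lemma:complexificationfirstintegral}, finishes the proof; the remainder of your outline (invariance, irreducibility, non-dicriticality, realification) is sound.
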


Notice that $P_{r,n,d}$ in the case $d\geq 2, \, r<n$  has a non-isolated singularity at the origin.
We observe that Theorem~\ref{Theorem:reebtypepham} does not hold for $r=2$ as can be seen from the following example. Take $\omega(x,y,z)=d(x^4 + y^4) - 2x^2 y^2 dy$ in $\mathbb C^3$. Then $\omega$ is clearly integrable (it depends only on two variables), it has a center type singularity at the origin and also its first nonzero jet is the differential of a truncated Pham polynomial $P_{2,3,2}$. Nevertheless, working with power series, it can be easily shown that $\omega$ does not admit a real analytic first integral.

\subsection{Complex analytic foliations}
In what follows, by {\it germ of a holomorphic foliation at the origin $0\in \mathbb C^2$} we shall mean
a germ of a holomorphic foliation by curves, with an isolated singularity at the origin $0 \in \mathbb C^2$. As already mentioned, our approach for proving Theorem~\ref{Theorem:reebtype} follows the idea of complexification of the problem, as suggested by \cite{moussu}. Indeed, it is based in the following variant of Mattei-Moussu theorem:

\begin{Theorem}
\label{Theorem:totreal}

Let $\fa$ be a germ of a holomorphic foliation at the origin $0 \in \mathbb C^2$,
 given by $\omega=0$ where  $\omega=d(xy) + {\tilde \omega}$ and ${\tilde \omega}$ has jet of order one equal to zero.
Then the following conditions are equivalent:

\begin{enumerate}[{(i)}]

\item $\fa$ admits a holomorphic first integral of the form $fg$ for irreducible germs $f, g \in \mathcal O_2$ in general position.

\item There is   a germ of an analytic dimension two variety $V^2\subset \mathbb C^2$ with $0 \in V^2$, having contact order one with  $\fa$ outside of the origin and
such that the restriction of $\fa$ has closed leaves in $V^2$.

\end{enumerate}

\end{Theorem}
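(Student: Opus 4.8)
The plan is to read condition (ii) as a higher-dimensional avatar of Moussu's real trace and to run his argument in both directions between (i) and (ii). First I would record the local structure of $\fa$. Since $\om = d(xy) + \tilde\om$ with $\tilde\om$ of order $\geq 2$, the linear part is dual to the vector field $x\po_x - y\po_y$, so the origin is a nondegenerate resonant saddle with eigenvalue ratio $-1$. Such a singularity is non-dicritical and carries exactly two smooth separatrices $S_1=\{f=0\}$ and $S_2=\{g=0\}$, tangent to the axes and in general position; these are the curves appearing in (i). The holonomy $h$ of $S_1$, computed on a complex transversal $\Sigma$, is a germ in $\Diff(\bc,0)$ with multiplier $e^{-2\pi i}=1$, i.e.\ $h$ is tangent to the identity. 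The whole argument rests on the classical dichotomy for such a saddle: $\fa$ has a holomorphic first integral of the form $fg$ if and only if $h=\Id$. Indeed, a tangent-to-identity germ is analytically linearizable only when it is the identity, and by the Mattei--Moussu criterion trivial (finite) holonomy is equivalent to the existence of a holomorphic first integral. Thus both implications reduce to comparing the geometry on $V^2$ with the triviality of $h$.

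For $(i)\Rightarrow(ii)$ I would argue as follows. Assuming a first integral $fg$, put $(u,v)=(f,g)$; since $S_1,S_2$ are smooth and transverse this is a holomorphic coordinate system and $\fa$ becomes $d(uv)=0$. Take $V^2:=\{u=\bar v\}$, a germ of totally real analytic surface through the origin. On $V^2$ one has $uv=|v|^2\in\re_{\geq 0}$, so the leaves $\{uv=c\}$ meet $V^2$ exactly for $c\in\re_{>0}$, and then along the circle $\{|v|=\sqrt c\}$. These circles are compact, hence closed, and foliate $V^2\setminus\{0\}$; a direct tangent-space computation shows that off the origin $T_pV^2\cap T_p\fa$ is one real dimensional, which is precisely the contact order one condition. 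This yields (ii).

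For the main direction $(ii)\Rightarrow(i)$ I would proceed as follows. The contact-order-one hypothesis says exactly that $V^2\setminus\{0\}$ carries a regular real one dimensional foliation $\fa|_{V^2}$ whose leaves are the traces $V^2\cap L$, each contained in a single leaf $L$ of $\fa$; by hypothesis these traces are closed curves. Near $S_1$ choose a small real analytic arc $A\subset V^2$ transverse to $\fa|_{V^2}$, so that $A$ meets each nearby trace exactly once and meets each leaf of $\fa$ in one point. Complexifying $A$ (replacing its real-analytic parametrization $a(t)$, $t\in(-\ve,\ve)$, by $a(\tau)$, $\tau\in(\bc,0)$) produces a complex transversal $\Sigma\supset A$ to $\fa$. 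Let $\ga$ generate $\pi_1$ of a leaf, so that it realizes the separatrix holonomy $h=h_\ga$ on $\Sigma$; the trace through $a\in A$ is a closed loop in its leaf in the class of $\ga$, so lifting $\ga$ from $a$ returns to $a$. Hence $h|_A=\Id$, and since $A$ is a nonconstant real analytic arc in $\Sigma\cong(\bc,0)$ it is a uniqueness set for holomorphic germs, forcing $h=\Id$. By the dichotomy above, $\fa$ admits the holomorphic first integral $fg$, which is (i).

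The delicate point, and where I expect the main obstacle to lie, is this passage from the closedness of the real traces on the abstract surface $V^2$ to the literal identity $h=\Id$. Two things must be checked carefully, and they are exactly where the contact order one hypothesis is used: that the real arc $A$ extracted inside $V^2$ complexifies to a \emph{genuine} complex transversal $\Sigma$ of $\fa$ (rather than becoming tangent to the leaves somewhere), and that each closed trace is freely homotopic in its leaf to the generator $\ga$ carrying the separatrix holonomy, so that ``the trace closes up'' translates exactly into ``$h$ fixes the corresponding point of $A$.'' Once these are in place, the identity principle on the one-variable uniqueness set $A$ together with the Mattei--Moussu criterion finishes the proof; the remaining verifications (smoothness of the separatrices, the non-dicritical nature of the $-1$ resonant saddle, and the tangent-space computation behind contact order one) are routine.
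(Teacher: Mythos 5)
Your direction (i) $\Rightarrow$ (ii) is correct and is in substance the paper's own construction: in the coordinates $(u,v)=(f,g)$ your surface $\{u=\bar v\}$ is exactly the paper's $V^2=(\Re f=\Re g)\cap(\mI f=-\mI g)$, on which $fg=|v|^2$, and your circle traces and tangent-space computation reproduce the paper's verification of closed leaves, contact order one, and total reality. Your framing of the converse via the separatrix holonomy $h$ (tangent to the identity, with the dichotomy ``first integral of the form $fg$ iff $h=\Id$'' via Mattei--Moussu) is a legitimate no-blow-up variant of the paper's route, which instead blows up the origin and works with the projective holonomy $h(z)=e^{i\pi}z+\ldots$ of the equator of the exceptional divisor.

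However, in (ii) $\Rightarrow$ (i) there is a genuine gap, located exactly at the point you flag and then set aside as checkable: you use hypothesis (ii) in the strengthened form ``each trace is a \emph{compact loop}, freely homotopic in its leaf to the holonomy generator $\gamma$,'' whereas (ii) only says the leaves of $\fa|_{V^2}$ are \emph{topologically closed}. For a nontrivial tangent-to-identity germ these are not interchangeable: in the Leau--Camacho flower a pseudo-orbit starting in a repelling direction exits the domain after finitely many forward steps, so a trace can a priori be a properly embedded arc --- a closed subset of the neighborhood --- rather than a loop, and then no point of your arc $A$ is fixed by $h$ and the step ``$h|_A=\Id$'' collapses. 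The missing ingredient is precisely what the paper isolates as Lemma~\ref{Lemma:finite}: by the Camacho--Leau topological classification, if $h\neq\Id$ is tangent to the identity then on a real analytic curve through $0$ on which the pseudo-orbits of $h$ live, every nontrivial pseudo-orbit has an end accumulating at the fixed point $0$, hence is not closed; combined with the verification (this is where contact order one enters) that following a trace from a point of $\Gamma=V^2\cap\Sigma$ returns to $\Gamma$ and realizes the $h$-pseudo-orbit, closedness of all traces then forces $h=\Id$ (or, with your refinement, $h^k=\Id$ for some $k\geq 1$, whence $h=\Id$ by torsion-freeness of tangent-to-identity germs). Your second flagged point, pinning the free homotopy class of a trace to $\gamma$ rather than to the trivial class, is likewise not routine in your setting; the paper sidesteps it by blowing up, where the traces lie in the Moebius band $M^2$ around the invariant equator of the divisor, so nearby traces are automatically in the class of the equator and the relevant return map is the equator holonomy, shown periodic of period two. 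So your architecture can be completed, but as written the main implication rests on an unproved identification of ``closed'' with ``periodic,'' which is the actual mathematical content of the paper's Lemma~\ref{Lemma:finite} and of its blow-up construction.
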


In the situation of the above theorem we also have:

\begin{itemize}

\item {\it There is   a germ of a totally real analytic variety $V^2\subset \mathbb C^2$ having contact order one with $\fa$ and
such that the restriction of $\fa$ to $V^2$  has a center type singularity at the origin  in $V^2$.}

\end{itemize}

The notions of generic position, order one contact and totally real will be recalled in \S~\ref{section:holomorphic}.

Our Theorem~\ref{Theorem:totreal} above has connections with the main result in \cite{Cerveau-LinsNeto} where the authors prove the existence of a meromorphic first integral for a of a codimension one holomorphic foliation at $0\in \mathbb C^n, n \geq 2$  provided that it is  tangent to a germ at $0\in \mathbb C^n$  of a real codimension one and irreducible analytic variety $M$.

\section{Holomorphic foliations: proof of Theorem~\ref{Theorem:totreal}}
\label{section:holomorphic}
A few words about the notions in the statement of Theorem~\ref{Theorem:totreal}.
Two irreducible and reduced germs $f, g \in \mathcal O_2$ with $f(0)=g(0)=0$ are {\it in general position} if $(f=0)$ and $(g=0)$ meet transversely at the origin.
We also recall that a  submanifold $V$ of a Kaehler complex manifold $M$ is called
{\it  totally real}  if the complex structure $J\colon TM \to TM$ of $M$ maps
each tangent space $T_p V\subset T_pM $ of $V$  into  the normal space $(T_p V)^\perp\subset T_p M$.
We refer to \cite{Chern} for a detailed exposition, examples and  characterizations of totally real manifolds.
As of now we mention that:  (i)  given a holomorphic function $ F \colon  M \to \mathbb C$, the smooth part of the
subvarieties  $(\Re F=0)\subset M$ and $(\mI F=0)$ are totally real submanifolds. (ii) the intersection of  two
totally real submanifolds is totally real at the points where this intersection is transverse.

In particular, given two germs of holomorphic functions $f, g \colon \mathbb C^2 \to \mathbb C$, in general position,
and vanishing at $0\in \mathbb C^2$, then the intersection
$V^2 = (\Re(f)= \Re(g))\cap (\mI(f)=- \mI(g))$ is a germ of a  totally real surface at the origin $0 \in \mathbb C^2$.

In Theorem~\ref{Theorem:totreal} above the leaves of $\fa$ are of real dimension two, in a space of real dimension four. Thus, condition (ii) is equivalent to the following:

\begin{itemize}
\item[{\rm (ii)'}] There is   a germ of a totally real analytic surface $V^2\subset \mathbb C^2$ with $0 \in V^2$ and
such that the restriction of $\fa$ has closed leaves in $V^2$.
\end{itemize}

Given a real foliation $\fa$ of codimension $k$ in a differentiable manifold $M$ and an immersed connected submanifold $V\subset M$, the {\it contact order} of $\fa$ with $V$ at a point $p\in V$ is the dimension of the intersection $T_p(V)\cap T_p(\fa) \subset T_p(M)$ as linear subspaces of the tangent space $T_p(M)$. We say that $\fa$ has contact order $r$ with $V$ if their contact order is $r$ at each point $p\in V$. In the case where $\fa$ is a holomorphic foliation of (complex) codimension one in an open subset $U\subset \mathbb C^2$ with $\sing(\fa)=\{0\}\subset U$, and $V^2\subset U$ is a
real surface, we have:
\begin{itemize}
\item $V^2$ is transverse to $\fa$ off the origin iff $V^2\setminus\{0\}$ and $\fa$ have contact order equal to zero.

\item $V^2$ is $\fa$ invariant iff $V^2\setminus \{0\}$ and $\fa$ have contact order equal to $2$.

\item $V^2\setminus \{0\}$ has contact order with $\fa$ equal to $1$ iff $V^2\setminus \{0\}$ is a totally real submanifold not invariant by $\fa$.

\end{itemize}

Let us now prove Theorem~\ref{Theorem:totreal}.

\begin{proof}[Proof of Theorem~\ref{Theorem:totreal}]

First we assume that $\fa$ admits a holomorphic first integral of the form $fg$ with $f, g \in \mathcal O_2$, $f(0)=g(0)=0$,
$f$ and $g$ (being germs reduced and  irreducible and)
in general position. We consider the analytic varieties of real codimension one $\mathcal  R: (\Re f = \Re g)\subset \mathbb R^4$ and $\mathcal I : (\mI f= - \mI g)\subset \mathbb R^4$. Since $f$ and $g$ are in general position the intersection
$\mathcal R \cap \mathcal I =V^2$ is a two-dimensional analytic variety. Also $0 \in V^2$ because $f$ and $g$ vanish at the origin. Let us now put $X=\frac{f+ g}{2}$ and $Y=\frac{f - g }{2 i}$. Then $f= X + iY$ and $g = X - i Y$ and therefore
$fg=X^2 + Y^2$. Moreover, in the variety $V^2$ we have $X=\Re(f)=\Re(g)$ and $Y=\mI(f)=-\mI(g)$ so that, restricted to $V^2$ we have
$fg=||f||^2=||g||^2$. This shows that the restriction to $V^2$ of the foliation  $\fa$ is a real analytic foliation by curves which are closed. In particular, the contact order of $\fa$ with $V^2$ is one. Indeed the restriction $\fa\big|_{V^2}$  gives an analytic center type singularity at the origin $0\in V^2$. Finally, since $\fa$ is holomorphic and has contact order equal to one with $V^2$ it follows that  $V^2$ is a totally real subvariety.
This proves $(i) \implies (ii)$ in Theorem~\ref{Theorem:totreal}.

Let us now prove $(ii) \implies (i)$. From hypothesis (ii) and from the considerations right after Theorem~\ref{Theorem:totreal} we conclude that:
{\em There is   a germ of a totally real analytic variety $V^2\subset \mathbb C^2$ having contact order one with $\fa$ and
such that the restriction of $\fa$ to $V^2$  has a center type singularity at the origin  in $V^2$.}
Up to an analytic change of coordinates in $\mathbb C^2$ we may assume that $V^2\subset \mathbb C^2$ corresponds to the totally real space $\mathbb R^2 \subset \mathbb C^2$, i.e., in suitable local coordinates $(x,y)\in \mathbb C^2$ we have $V^2: (\mI (x) = \mI (y)=0)$.
Assume now that we have a holomorphic foliation $\fa$ defined in a neighborhood of the origin $0\in \mathbb C^2$ by a
one-form $\omega=d(xy) + {\tilde \omega}$ where ${\tilde \omega}$ has zero jet of order one at the origin.
We know that $\fa: \omega=0$ corresponds to a Siegel singularity at the origin, since it is given by a one-form
with linear part $\omega_1:=xdy + ydx$. The blow-up $y=tx$ at the origin produces a foliation of the form
$2tx dx + x^2dt + {\tilde \omega}(x,tx)=0$. Thus we have a singularity of Siegel type on the origin of the system $(x,t)$ given by
$2t dx + xdt +\ldots=0$. In the coordinate system $x=uy$ we have $2uy dy + y^2 du + {\tilde \omega}(uy,y)=0$ and then we have a singularity of Siegel type at the origin of this system given by $2u dy + ydu + \ldots=0$.

Now we make an assumption:

\begin{Assumption}
Assume that $\fa$ is the complexification of a real analytic foliation $\fa_{\mathbb R}$ which has
a center type singularity at the origin $0\in \mathbb R^2$.

\end{Assumption}

The above assumption means that $\fa$ has contact order one with  the real space $\mathbb R^2 \subset \mathbb C^2$ and its restriction to this space exhibits a center type singularity at the origin $0\in \mathbb R^2$.
Recall that the real space above is given by $\mI(x)=\mI(y)=0$ where $(x,y)\in \mathbb C^2$ are affine
coordinates in $\mathbb C^2$.

The inverse image of this real plane in the blow-up $\tilde{\mathbb C^2 _0}$ corresponds to a Moebius band $M^2$
through the equator of the exceptional divisor $\mathbb E\simeq \mathbb CP(1)$. The pull-back foliation $\fa^*$
in $\tilde{\mathbb C^2 _0}$ then leaves invariant this Moebius band and has only closed leaves in $M^2$. Indeed, since $\fa_\mathbb R$ has a center type singularity at the origin, the foliation $\fa^*$ restricted to $M^2$ has closed compact leaves in a neighborhood of the equator in $M^2$. Now we consider the projective holonomy group of the exceptional divisor $E$. This means the holonomy group of the leaf $E\setminus \sing(\fa^*)$ for the foliation $\fa^*$. From what we have seen above, this foliation has exactly two singularities in $E$, corresponding to the north and south poles of $E$. Thus the holonomy group above mentioned is generated by a simple loop around the equator, i.e, this is a cyclic group.
Let us denote by $h$ a generator of this group obtained as follows. Choose a point $p\in E$ and a local transverse disc $\Sigma$ to $E$ centered at $p$. Then denote by $h\colon (\Sigma,p) \to (\Sigma,p)$ the holonomy map corresponding to
the equator $\gamma=M^2 \cap E$. Notice that, since $E$ is invariant by $\fa^*$, the equator $\gamma$ corresponds to a  compact leaf (periodic orbit) of the induced foliation in $M_2$. Because the leaves of $\fa^*$ in $M^2$ are all compact in a neighborhood of $\gamma$, this implies that the holonomy map (Poincaré map) corresponding to $\gamma$ regarding $\fa^*\big|_{M^2}$ is a periodic map of order two. Thus the $\fa^*$-holonomy map $h$ admits a real analytic curve $\gamma \cap \Sigma$ where its orbits are periodic of period $\leq 2$. Since $\gamma$ contains the origin,  $h$ is a periodic map
of period $2$. This implies, by standard methods described in \cite{mattei-moussu} that the foliation $\fa$ admits a holomorphic first integral. Now we claim that this first integral is of the form $fg$ where $f, g \in \mathcal O_2$ are irreducible and reduced and, up to reordering $f$ and $g$, we must have  $x\big|f$ and $y\big| g$ in $\mathcal O_2$. This is not difficult to see since $\fa$ has a Siegel type singularity at the origin, of the form $d(xy) + {\tilde \omega}=0$ and this implies that there are exactly two (transverse) separatrices  through the singular point at the origin. These separatrices are given by given $(xy=0)$. Since $(f=0)$ and $(g=0)$ correspond to separatrices of $\fa$ the result follows.

\end{proof}

\section{Proof of Theorem~\ref{Theorem:newcenter}}

Let us first state a few of lemmas we shall need.

\begin{Lemma}
\label{Lemma:finite}
Let $h\in\Diff(\mathbb C,0)$ be a germ of holomorphic diffeomorphism
tangent to the identity, i.e., $h(z) = z + a_{k+1} z^{k+1} + \ldots$
Assume that there is a real analytic invariant curve $\gamma$ through the origin
$0\in \mathbb C$ such that the pseudo-orbits of $h$ in $\gamma$ are closed. Then
$h$ is the identity.
\end{Lemma}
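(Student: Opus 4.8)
The plan is to argue by contradiction: assuming $h\neq\Id$, I would show that the parabolic dynamics of $h$ forces some orbit on $\gamma$ to accumulate at the origin, contradicting the hypothesis that the pseudo-orbits in $\gamma$ are closed. Since $h$ is tangent to the identity but not the identity, write $h(z)=z+a_{k+1}z^{k+1}+\cdots$ with $a_{k+1}\neq 0$ and $k\geq 1$. The first point I would record is that $h$ has no fixed point, indeed no periodic point, other than the origin in a punctured neighborhood of $0$: for each $n\geq 1$ one computes $h^n(z)-z=n\,a_{k+1}z^{k+1}+\cdots$, a nonzero holomorphic germ whose only zero near $0$ is $z=0$. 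Hence on $\gamma\setminus\{0\}$, sufficiently close to the origin, $h$ acts freely.

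Next I would analyze the induced one-dimensional dynamics on the invariant curve. Because $h$ is tangent to the identity, its real differential at the origin is the identity, $Dh_0=\Id$; in particular $h$ preserves orientation and fixes the tangent directions of $\gamma$ at $0$, so each half-branch of $\gamma$ emanating from the origin is itself invariant. I would parametrize such a half-branch by a real-analytic parameter $t\in[0,\varepsilon)$ with $t=0$ corresponding to the origin, and let $\tilde h$ denote the germ of $h\big|_\gamma$ read in this parameter. Then $\tilde h$ is a monotone, orientation-preserving homeomorphism with $\tilde h(0)=0$ and, by the previous paragraph, no fixed point in $(0,\varepsilon)$; consequently $\tilde h(t)-t$ has constant sign there.

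To conclude, I would distinguish the two signs. If $\tilde h(t)<t$ on $(0,\varepsilon)$, then for any $t_0$ the forward iterates $t_0>\tilde h(t_0)>\tilde h^2(t_0)>\cdots$ form a strictly decreasing sequence which stays in the domain and converges to the unique fixed point $0$. If instead $\tilde h(t)>t$, the same conclusion holds for the backward iterates, i.e. for $h^{-1}$, whose pseudo-orbits coincide with those of $h$. In either case we obtain an infinite pseudo-orbit contained in $\gamma\setminus\{0\}$ that accumulates at the origin; such a set is not closed, since its closure strictly contains it by adjoining the limit point $0$. This contradicts the hypothesis, and therefore $h=\Id$.

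I expect the only delicate point to be the reduction to the monotone one-dimensional germ: one must check that the half-branches of $\gamma$ are genuinely invariant (which is where $Dh_0=\Id$ enters) and that the convergent iterates never leave the germ's domain of definition (guaranteed by monotone convergence toward $0$). Everything else rests on the elementary observation that a tangent-to-the-identity, non-identity germ has no periodic points near the origin. As a remark, this is precisely the real-curve manifestation of the Leau--Fatou flower theorem: the half-branch of $\gamma$ necessarily lies in an attracting or repelling petal of $h$, and in such a petal every orbit converges to $0$.
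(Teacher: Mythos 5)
Your proof is correct in substance, but it takes a genuinely different and more self-contained route than the paper. The paper's proof is a two-line citation: it invokes the topological classification of tangent-to-the-identity germs due to Camacho and Leau (the Leau--Fatou flower picture) and reads off from that description that a non-identity germ admits no nontrivial invariant curve through $0$ with closed orbits. You instead prove the needed statement from scratch: the computation $h^n(z)-z=n\,a_{k+1}z^{k+1}+\cdots$ rules out fixed and periodic points near $0$, the restriction of $h$ to an invariant half-branch becomes a fixed-point-free monotone germ of $[0,\varepsilon)$, and monotone iteration (forward or backward according to the sign of $\tilde h(t)-t$) produces an orbit in $\gamma\setminus\{0\}$ accumulating at $0$, hence non-closed. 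What your approach buys is independence from the flower theorem --- only elementary one-dimensional real dynamics is used --- and it makes explicit the domain-of-definition issue for pseudo-orbits, which the paper's citation glosses over; what the paper's approach buys is brevity and a stronger conclusion available off the shelf (the petal dynamics describe \emph{all} orbits, not just those on $\gamma$). Your closing remark correctly identifies your argument as the real-curve shadow of that theorem.

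One step deserves repair: you deduce invariance of each half-branch of $\gamma$ from $Dh_0=\Id$ fixing tangent directions, but this is insufficient when several half-branches of $\gamma$ share the same tangent direction at $0$ (e.g.\ the two right half-branches of $y^2=x^6$), since $h$ could a priori permute half-branches with a common tangent. The fix is one line: $h$ permutes the finitely many half-branches, so some iterate $h^N$, $N\geq 1$, leaves each half-branch invariant; since $h^N(z)=z+N\,a_{k+1}z^{k+1}+\cdots$ is again tangent to the identity and nontrivial, your monotonicity argument applies to $h^N$ and yields an $h^N$-orbit in $\gamma\setminus\{0\}$ accumulating at $0$; as this orbit is contained in the corresponding pseudo-orbit of $h$, which never contains the fixed point $0$ by injectivity, the $h$-pseudo-orbit is itself non-closed, and the contradiction goes through unchanged.
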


\begin{proof}[Proof of the lemma]
We shall use the well-known topological description of the
germs tangent to the identity in dimension one due to Camacho  (\cite{camacho}) and Leau (\cite{Leau}).
From this description, if the map is not the identity the only invariant curves through the origin
where the orbits are closed are the trivial ones, i.e, the origin itself.
\end{proof}

Let $\fa$ be a real analytic codimension one foliation with singularities in a neighborhood of the origin $0 \in \mathbb R^n$. This means that $\fa$ is defined by a real analytic one-form
$\omega=\sum\limits_{j=1}^n a_j(x)dx_j$, defined in a neighborood of the origin, and satisfying the integrability condition $\omega \wedge d\omega=0$. We consider the complexification of $\fa$ which we will denote by $\fa_{\mathbb C}$. This is a codimension one holomorphic foliation with singularity, defined in a neighborhood of the origin $0\in \mathbb C^n$ by the complexification $\omega_{\mathbb C}$ of the form $\omega$.
In complex coordinates $(z_1,\ldots,z_n)$ we can write $z_j=x_j + i y_j$ and $\omega_{\mathbb C}=
d(\sum\limits_{j=1}^n z_j ^2) + {\tilde \omega}_{\mathbb C}$ for some one-form ${\tilde \omega}_{\mathbb C}$ with zero first jet at the origin. Now we consider the real space $\mathbb R^n \subset \mathbb C^n$ given by $y_j=0, j=1,\ldots,n$.

The next result is a well-known easy to prove lemma:
\begin{Lemma}
\label{Lemma:complexificationfirstintegral}
Let $\fa$ be a real analytic foliation in a neighborhood of the origin $0 \in \mathbb R^n$ whose complexification $\fa_{\mathbb C}$ admits a
holomorphic first integral. Then $\fa$ admits a real analytic first integral, defined in some neighborhood of the origin.
Indeed, there is a real analytic first integral $f$ for $\fa$ such that the complexification $f_{\mathbb C}$ of $f$ is a
holomorphic first integral for $\fa_{\mathbb C}$.
\end{Lemma}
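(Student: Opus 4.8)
The plan is to obtain the desired real first integral by restricting a holomorphic first integral of $\fa_{\mathbb C}$ to the real space $\mathbb R^n\subset\mathbb C^n$ and then taking real and imaginary parts. First I would fix a holomorphic first integral $F$ of $\fa_{\mathbb C}$, so that $dF\wedge\omega_{\mathbb C}=0$ in a neighborhood of $0\in\mathbb C^n$, and let $\iota\colon\mathbb R^n\hookrightarrow\mathbb C^n$ be the inclusion of the totally real space $(y_1=\cdots=y_n=0)$. Since $\iota^*\omega_{\mathbb C}=\omega$ and $\iota^*dF=d(F\circ\iota)$, pulling back the relation $dF\wedge\omega_{\mathbb C}=0$ by $\iota$ immediately gives $d\varphi\wedge\omega=0$ for the complex-valued real analytic germ $\varphi:=F\big|_{\mathbb R^n}$. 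Thus $\varphi$ is already a first integral of $\fa$, albeit complex-valued.

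Next I would split $\varphi=u+iv$ into its real and imaginary parts $u=\Re\varphi$, $v=\mI\varphi$. Because $\omega$, $du$ and $dv$ all have real coefficients, separating real and imaginary parts in $d\varphi\wedge\omega=0$ yields $du\wedge\omega=0$ and $dv\wedge\omega=0$; at points where $\omega\ne0$ its kernel is a hyperplane, so each of $du,dv$ is there proportional to $\omega$, and hence each nonconstant one among $u,v$ is a genuine real analytic first integral of $\fa$. The one point that requires care — and which I expect to be the main (though mild) obstacle — is guaranteeing that $u$ or $v$ is nonconstant. I would settle this using that $\mathbb R^n$ is a maximal totally real submanifold of $\mathbb C^n$, hence a uniqueness set for holomorphic germs: if $\varphi=F\big|_{\mathbb R^n}$ were constant, then $F$ itself would be constant by analytic continuation, contradicting that $F$ is a first integral. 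So at least one of $u,v$, call it $f$, is a nonconstant real analytic first integral of $\fa$.

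Finally, to obtain the last assertion of the lemma I would complexify $f$ back. The identity $df\wedge\omega=0$ is an equality of $2$-forms whose coefficients are real analytic germs vanishing identically on a neighborhood of $0\in\mathbb R^n$; by the identity principle their holomorphic extensions vanish on a neighborhood of $0\in\mathbb C^n$. Since complexification commutes with both $d$ and $\wedge$, the complexification of $df\wedge\omega$ is exactly $df_{\mathbb C}\wedge\omega_{\mathbb C}$, so $df_{\mathbb C}\wedge\omega_{\mathbb C}=0$ and $f_{\mathbb C}$ is a holomorphic first integral of $\fa_{\mathbb C}$. This $f$ is then the required real analytic first integral whose complexification is a holomorphic first integral, which completes the plan.
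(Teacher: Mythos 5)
The paper offers no proof of this lemma at all: it is introduced with the phrase ``well-known easy to prove lemma'' and stated without a proof environment, so there is no argument of the authors' to compare yours against. Your proposal is correct and is the standard argument one would expect the authors to have in mind: restrict the holomorphic first integral $F$ to the totally real $\mathbb R^n$, use $\iota^*\omega_{\mathbb C}=\omega$ to get $d\varphi\wedge\omega=0$ for $\varphi=F|_{\mathbb R^n}$, split into real and imaginary parts (valid since $\omega$, $du$, $dv$ have real coefficients, so the real and imaginary parts of $d\varphi\wedge\omega$ vanish separately), and recover the holomorphic statement by complexifying $df\wedge\omega=0$ via the identity principle. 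You also correctly identified and closed the one genuine gap in the naive version of this argument: nonconstancy. Since $\mathbb R^n$ is a uniqueness set for holomorphic germs, $F|_{\mathbb R^n}$ constant would force $F$ constant, so at least one of $u=\Re\varphi$, $v=\mI\varphi$ is nonconstant, and its complexification is then a nonconstant holomorphic first integral, exactly as the last sentence of the lemma requires. The only cosmetic addition worth making is to normalize $f(0)=0$ by subtracting a constant, and to note explicitly that at points where $\omega\ne 0$ the relation $df\wedge\omega=0$ forces $df$ to annihilate $T\fa$, so $f$ is genuinely constant on leaves; you in fact mention this. In short: correct proof, supplied for a statement the paper leaves unproved.
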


The main point is the following:

\begin{Lemma}
\label{Lemma:orbitsperiodic}
Let $X$ be a real analytic vector field in a neighborhood of the origin $0\in \mathbb R^2$, having an isolated singularity at the origin and linear part at this singularity given by $DX(0)=x_1 \frac{\partial}{\partial x_2} - x_2 \frac{\partial}{\partial x_1}$. Assume also that the orbits of $X$ are closed (in the classical sense of topology) in some neighborhood of the origin. Then these orbits are periodic in some neighborhood of the origin and the origin is a center type singularity for $X$.

\end{Lemma}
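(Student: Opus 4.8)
The plan is to complexify the problem so that it falls under the holomorphic picture of Theorem~\ref{Theorem:totreal}, and then to run a blow-up and holonomy argument in which the closed-orbit hypothesis feeds precisely into Lemma~\ref{Lemma:finite}. First I would pass from $X$ to the real analytic foliation $\fa_{\mathbb R}$ whose leaves are the orbits of $X$: if $X = P\frac{\po}{\po x_1} + Q\frac{\po}{\po x_2}$ then $\fa_{\mathbb R}$ is given by $\omega = Q\,dx_1 - P\,dx_2$, whose first jet is $\omega_1 = x_1\,dx_1 + x_2\,dx_2 = \tfrac12\, d(x_1^2 + x_2^2)$. Since $\omega_1$ is nondegenerate, $\omega$ and its complexification $\omega_{\mathbb C}$ have isolated singularities, and writing $z_j = x_j + iy_j$ the first jet of $\omega_{\mathbb C}$ is $d(z_1^2 + z_2^2)$. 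The linear change $u = z_1 + iz_2,\ v = z_1 - iz_2$ turns this into $d(uv)$, so $\fa_{\mathbb C}$ is a Siegel singularity of exactly the type in Theorem~\ref{Theorem:totreal}: $\omega_{\mathbb C} = d(uv) + \tilde\omega$ with $\tilde\omega$ of order $\geq 2$. In these coordinates the real plane $\mathbb R^2 = (y_1 = y_2 = 0)$ becomes the totally real surface $(v = \bar u)$; it contains $\sing(\fa_{\mathbb C}) = \{0\}$, off the origin it has contact order one with $\fa_{\mathbb C}$ (being totally real and tangent to $X$), and $\fa_{\mathbb C}\big|_{\mathbb R^2} = \fa_{\mathbb R}$.

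Next I would blow up the origin. As the singularity is reduced of Siegel type, the exceptional divisor $E \cong \mathbb{CP}(1)$ is invariant and $\fa^*$ has exactly two singularities on $E$, the strict transforms of the separatrices $(u=0)$ and $(v=0)$. The real plane lifts to a Moebius band $M^2$ meeting $E$ along the equator $\gamma$, a loop in $E\setminus\sing(\fa^*)$ generating the cyclic fundamental group of the leaf $E\setminus\sing(\fa^*)$. Let $h\colon(\Sigma,p)\to(\Sigma,p)$ be the associated projective holonomy on a transverse disc $\Sigma$ centered at $p\in\gamma$. The key point is that the eigenvalue ratio of $\fa_{\mathbb C}$ at the origin is $-1$, so the linear holonomy of $\gamma$ is $\exp(2\pi i(-1)) = 1$, i.e.\ $h$ is tangent to the identity.

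Since $\fa_{\mathbb R}$ (equivalently $\fa^*\big|_{M^2}$) has closed orbits near the origin, the pseudo-orbits of $h$ along the real analytic invariant curve $\gamma\cap\Sigma$ are closed, so Lemma~\ref{Lemma:finite} forces $h = \Id$ and the projective holonomy of $E$ is trivial. By the standard criterion of Mattei-Moussu \cite{mattei-moussu} this yields a holomorphic first integral for $\fa_{\mathbb C}$; this is conclusion (i) of Theorem~\ref{Theorem:totreal}, which indeed applies since $\mathbb R^2$ realizes hypothesis (ii). By Lemma~\ref{Lemma:complexificationfirstintegral} its restriction to $\mathbb R^2$ is a real analytic first integral $F$ for $X$, and writing the holomorphic first integral as $fg$ with $(f=0),(g=0)$ the separatrices $(u=0),(v=0)$, on $\mathbb R^2$ one gets $F = uv\big|_{\mathbb R^2} + \hot = x_1^2 + x_2^2 + \hot$, a Morse function with an isolated minimum. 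Its level curves are therefore closed ovals around the origin, so the orbits of $X$ are periodic and the origin is a center.

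The main obstacle is the holonomy step. One must check that the closedness of the orbits of $X$ — a real, a priori merely topological condition — transfers to closed pseudo-orbits of the \emph{holomorphic} germ $h$ on the real analytic curve $\gamma\cap\Sigma$, and that $h$ is genuinely tangent to the identity so Lemma~\ref{Lemma:finite} applies. In particular the orientation reversal along the Moebius band $M^2$, which a priori only produces a period-$\le 2$ real return map, must be reconciled with the conclusion $h=\Id$. Everything downstream of Lemma~\ref{Lemma:finite} — Mattei-Moussu, the descent through Lemma~\ref{Lemma:complexificationfirstintegral}, and reading off the Morse/center structure of $F$ — is routine.
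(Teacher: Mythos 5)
Your overall route is exactly the paper's: complexify $X$ to a Siegel singularity $d(uv)+\hot$, blow up, read the closed-orbit hypothesis on the Moebius band $M^2$ as a closedness condition for the pseudo-orbits of the equator holonomy $h$ on the invariant real analytic curve $\gamma\cap\Sigma$, invoke Lemma~\ref{Lemma:finite}, then Mattei--Moussu and Lemma~\ref{Lemma:complexificationfirstintegral}, and finish with the Morse normal form of the first integral. But there is a genuine error at the step you yourself flagged as the main obstacle: the claim that $h$ is tangent to the identity is false. The formula $\exp(2\pi i\lambda)$ must be applied to the eigenvalue ratio of the \emph{blown-up} foliation at a singular point of $E$, not to the ratio $-1$ of $\fa_{\mathbb C}$ at the origin before blow-up. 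For $d(uv)+\hot$, the chart $v=tu$ gives $u\,dt+2t\,du+\ldots=0$, so the corner singularities on $E$ have transverse/divisor eigenvalue ratio $-1/2$, and the linear part of the equator holonomy is $e^{2\pi i(-1/2)}=-1$; the paper records this as $h(z)=e^{i\pi}z+\ldots$. Your conclusion $h=\Id$ is in fact false even in the model case $\omega=d(uv)$, where the equator holonomy is exactly $z\mapsto -z$; this is precisely the orientation reversal of the Moebius band that you correctly suspected could not be reconciled with trivial holonomy.

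The repair is small and is what the paper does: since $h'(0)=-1$, the square $h\circ h$ is tangent to the identity, it still preserves the real analytic curve $\gamma\cap\Sigma$, and its pseudo-orbits there are still closed (a nontrivial orbit of a tangent-to-identity germ lies in a Leau--Camacho petal and accumulates at the origin, hence is not closed), so Lemma~\ref{Lemma:finite} applies to $h^2$ and yields $h^2=\Id$, i.e.\ $h$ is periodic of period two. A finite (order-two) projective holonomy group is all that Mattei--Moussu requires, so everything downstream of this point in your argument --- the holomorphic first integral of the form $fg$ with $(f=0),(g=0)$ the separatrices, the descent to a real analytic first integral $F=x_1^2+x_2^2+\hot$ via Lemma~\ref{Lemma:complexificationfirstintegral}, and the Morse-lemma conclusion that the origin is a center --- goes through unchanged.
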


\begin{proof}
Indeed, the complexification $X_{\mathbb C}$ of $X$ is a complex analytic vector field defined in a neighborhood of the origin $0\in \mathbb C^2$. In complex affine coordinates $(x,y)\in \mathbb C^2$ we have $X_{\mathbb C}= x \frac{\partial}{\partial x} -y \frac{\partial}{\partial y} + X_2$ where $X_2$ has a zero order one jet at the origin.
Then $X_{\mathbb C}$ generates a holomorphic foliation $\fa_{\mathbb C}$ with an isolated Siegel type singularity at the origin, of the form $xdx + ydy + \ldots=0$. Then $\fa_{\mathbb C}$ is in the Siegel domain and we may assume that the coordinate axes are invariant (\cite{mattei-moussu}). In this case the quadratic  blow-up of $\mathbb C^2$ at the origin induces a foliation
$(\fa_{\mathbb C})^*$ in the blow-up space $\widetilde{\mathbb C^2 _0}$ which leaves invariant the exceptional divisor $E\simeq \mathbb CP(1)$ and has exactly two singularities, the north and south poles, in $E$, both of Siegel type.
The equator $\gamma$ then generates the projective holonomy of $E$ relatively to $(\fa)^*$ via a germ of a holomorphic diffeomorphism $h(z)= e^{i\pi} z + \ldots$. This map $h$ once evaluated in a suitable transverse disc $\Sigma\simeq \mathbb D$
centered at some point $p \in \gamma$ and transverse to $E$, leaves invariant a real analytic curve $\Gamma\subset \Sigma$, corresponding to the intersection of the inverse image of the real plane $\mathbb R^2 : (\mI (x) = \mI (y)=0)$ with the transverse section $\Sigma$. Restricted to $\Gamma$ the pseudo-orbits of $h$ are closed. This does not mean that the trajectories of $X$ are periodic.  Now applying Lemma~\ref{Lemma:finite} we conclude that $h$ is periodic of period two.
From Mattei-Moussu theorem (\cite{mattei-moussu} page 473) the foliation $\fa_\mathbb C$ admits then a holomorphic first integral in a neighborhood of the origin $0\in \mathbb C^2$. From Lemma~\ref{Lemma:complexificationfirstintegral} we conclude that the vector field $X$ admits an analytic first integral (not necessarily a potential function). Let us denote by $f\colon U,0 \to \mathbb R,0$ an analytic first integral of $X$. This means that $X(f)=0$, i.e., $f$ is constant on each orbit of $X$ in $V$.
Thanks to the linear part of $X$ we may assume that $f(x_1,x_2) = x_1 ^2 + x_2 ^2 + hot$ and the thanks to Morse lemma
we conclude that the origin is a center singularity for $X$.

\end{proof}

\begin{proof}[Proof of Theorem~\ref{Theorem:newcenter}]
As mentioned in the introduction,  the main point is $(i) \implies (ii)$. Let us then assume that the leaves are closed in some small neighborhood of the origin. According to Lemma~\ref{Lemma:orbitsperiodic} the origin is a center singularity. Evoking then Lyapunov-Poincaré theorem (Theorem~\ref{Theorem:centertheorem}) we conclude that $\fa$ admits a real analytic first integral.
\end{proof}

\begin{proof}[Proof of Corollary~\ref{Corollary:vfcenter}]
Lemma~\ref{Lemma:orbitsperiodic} shows that $(i)\implies (ii)$. Theorem~\ref{Theorem:centertheorem}
shows that $(ii)\implies (iii)$. Classical Morse lemma shows that $(iii)\implies (iv)$.
Finally, $(iv)\implies (i)$ is straightforward from the fact that the linear  part of
$X$ admits the first integral $x_1^2 + x_2 ^2$.

\end{proof}
\section{Proof of Theorem~\ref{Theorem:reebtype}}

Now we are in  the following situation. We have a codimension one real analytic singular foliation $\fa$ defined in a neighborhood $U$ of the origin
$0\in \mathbb R^n, n \geq 3$. We assume that $\fa$ is of the form $\omega=0$ where $\omega$ is integrable real analytic and
writes as $\omega=d(\sum\limits_{j=1}^r x_j ^2) + {\tilde \omega}$ where the first jet of ${\tilde \omega}$ at the origin is zero. A very first remark is that we may suppose $ r \leq n-1$. Indeed, the case $r=n$
is covered by Reeb's theorem (Theorem~\ref{Theorem:Reeblinearization}).

Let us prove (i). For this sake we shall make the following assumption:
\begin{Assumption}
We have $r=2$ and the leaves of $\fa$ are closed  in some neighborhood of the origin.
\end{Assumption}

We consider the complexification of $\fa$ which we will denote by $\fa_{\mathbb C}$.

\noindent{\bf First case $r=2, \, n =3$}.
 In this case the hypersurfaces given by $d(\sum\limits_{j=1}^r x_j ^2)=0$ are coaxial cylinders
with axis on the $x_3$-axis. Let us denote by $\mathbb R^2 \cong E^2\subset \mathbb R^3$ a real plane given by $x_3=A x_1 + B x_2$ for some coefficients $A, B$ such that $E$ is in general position with respect to $\fa$. For simplicity we shall write $E=\mathbb R^2$. The restriction $\fa\big|_{\mathbb R^2}$ is then a foliation with an isolated singularity at the origin and given by a one-form
$d(x_ 1^2 + x_ 2 ^2) + \ldots=0$. Moreover, by hypothesis the leaves are closed so that by Theorem~\ref{Theorem:newcenter} we know that $\fa\big|_{\mathbb R^2}$ admits a real analytic first integral, indeed, it is analytically linearizable.
Let us denote by $h\colon (\mathbb R^2, 0) \to (\mathbb R, 0)$
a quadratic first integral for $\fa\big|_{\mathbb R^2}$ defined in a neighborhood of the origin $0 \in \mathbb R^2$.
Then the complexification $h_{\mathbb C}$ of $h$ is a holomorphic first integral  for the complexification of $\fa\big|_{\mathbb R^2}$ to $\mathbb C^2$. Since the operators "restriction" and "complexification" commute,
we know that the restriction of the complexification $\fa_{\mathbb C}$ to $\mathbb C^2$ is the complexification of the restriction $\fa\big|_{\mathbb R^2}$. Thus we have shown that $\fa\big|_{\mathbb C^2}$ admits a holomorphic first integral.

For a suitable choice of the plane $E: x_3 = A x_1 + B x_2$ we may assume that:

\begin{Assumption}
Assume that the complex plane $\mathbb C^2 \subset \mathbb C^3$ obtained from $E$  is in general position with
respect to $\fa_{\mathbb C}$.

\end{Assumption}
Just to preserve the simplicity of ideas, if $E=\mathbb R^2$ is given by $x_3=0$ then
$\mathbb C^2\subset \mathbb C^3$ above mentioned will be given by $z_3=0$.

Under the above assumption, according to \cite{mattei-moussu} the existence of a holomorphic first integral in $\mathbb C^2$ assures the existence of a holomorphic first integral for $\fa_{\mathbb C}$ in $\mathbb C^3$. This completes this part.

Now we consider the remaining case for $r=2$, i.e., $n \geq 3$. Let us for instance assume that $n=4$.
Given a generic linearly embedded hyperplane $\mathbb R^3 \cong E^3\subset \mathbb R^4$, given by
some equation $x_4 = A x_1 + B x_2 + C x_3$ for generic coefficients $A, B, C$ we may consider the restriction $\fa\big|_{E}$. This foliation in $\mathbb R^3$ is subject to the already considered case
$r=2, n=3$. Thus we may conclude that $\fa\big|_{E}$ admits an analytic first integral defined in some neighborhood of the origin $0 \in E\cong \mathbb R^3$. By Lemma~\ref{Lemma:complexificationfirstintegral} the complexification $(\fa\big|_{E})_{\mathbb C}$ of
this foliation, is a foliation in neighborhood of the origin $0\in \mathbb C^3\subset \mathbb C^4$, and this foliation germ admits a holomorphic first integral.
Let us denote, as usual, by $\fa_{\mathbb C}$ the complexification of $\fa$.
Moreover, as already observed, we have $\fa_{\mathbb C}\big|_{\mathbb C^3}= (\fa\big|_{\mathbb R^3})_{\mathbb C}$, i.e,
$\fa_{\mathbb C}$ is the extension to $\mathbb C^4$ of the complexification of the restriction of $\fa$ to $\mathbb R^3$. In particular, the restriction of $\fa_{\mathbb C}$ to $\mathbb C^3$ admits a holomorphic first integral. The plane $\mathbb C^3$ may be assumed to be in general position with respect to $\fa$ in $\mathbb C^4$. Hence, according to \cite{mattei-moussu}, the existence of a holomorphic first integral for $(\fa_{\mathbb C})\big|_{\mathbb C^3}$ implies the existence of a
holomorphic first integral for $\fa_{\mathbb C}$ in some neighborhood of the origin $0 \in \mathbb C^4$. By Lemma~\ref{Lemma:complexificationfirstintegral} the foliation $\fa$ admits a real analytic first integral in some neighborhood of the origin $0\in \mathbb R^4$.
The case $n \geq 5$ follows from this type argument in an induction process.
This ends the proof of (i).

Let us now prove (ii). In order to fix the ideas, we will first consider the:

\noindent{\bf  case $3 \leq r=n-1$}
Let us start with the case $r=3$ and $n=4$.
The corresponding linear foliation has leaves diffeomorphic to the cylinder $S^2\times \mathbb R$ in $\mathbb R^4$. Moreover, the original foliation  is given by a one-form $\omega=d(x_1^2 + x_2 ^2 + x_3 ^2) + {\tilde \omega}(x_1,\ldots,x_4)$. The procedure is pretty similar to the one adopted for the case $r=2$ and $n=3$. Indeed, we consider a hyperplane $\mathbb R^3 \cong E\subset \mathbb R^4$ in general position with respect to $\fa$, given by $x_4=a_1 x_1 + a_2 x_2 + a_3 x_3$ for some suitable choice of $a_1,a_2,a_3$. The restriction $\fa\big|_{E}$ is then a foliation
given by a one-form $\omega=d(x_1^2 + x_2 ^2 + x_3 ^2) + {\tilde \omega}(x_1,\ldots,x_4)$.
Then Reeb's theorem (Theorem~\ref{Theorem:Reeblinearization}) implies that $\fa\big|_{E}$ admits an analytic first integral in some neighborhood of the origin $0 \in E \cong \mathbb R^3$. By arguments already explicit above, i.e. applying Lemma~\ref{Lemma:complexificationfirstintegral} and the extension result in \cite{mattei-moussu} (page 471), this implies that $\fa$ admits a real analytic first integral in a neighborhood of the origin $0\in \mathbb R^4$. Proceeding by induction we may then conclude that the theorem holds for the case $r=n-1$.

Let us now consider the remaining cases.

\noindent{\bf case $3 \leq r\leq n-2$}
In order to make clear the ideas involved  we will consider the case $r=3$ and $n=5$.
The corresponding linear foliation has leaves diffeomorphic to the cylinder $S^2\times \mathbb R^2$ in $\mathbb R^5$. Moreover, the original foliation  is given by a one-form $\omega=d(x_1^2 + x_2 ^2 + x_3 ^2) + {\tilde \omega}(x_1,\ldots,x_5)$.
The procedure is pretty similar to the one adopted for the case $r=2$ and $n=3$. Indeed, we consider a hyperplane $\mathbb R^4 \cong E\subset \mathbb R^5$ in general position with respect to $\fa$, given by $x_5=a_1 x_1 + a_2 x_2 + a_3 x_3 + a_4 x_4$ for some suitable choice of $a_1,\ldots,a_4$. This restriction $\tilde \fa$ is given by a one-form $\tilde \omega= d(x_1^2 + x_2 ^2 + x_ 3 ^2) + \tilde {\tilde \omega}(x_1,\ldots,x_4)$. Then by the case $r=n-1$ we conclude that $\tilde \fa$ admits a real analytic first integral in some neighborhood of the origin $0\in \mathbb R^4$.
 By the same extension arguments recurrently used we conclude that $\fa$ admits a real analytic first integral in some neighborhood of the origin $0\in \mathbb R^5$. The general remaining case is proved in a similar way by induction. This ends the proof of (ii).

\section{Proof of Theorem~\ref{Theorem:reebtypepham}}
\begin{proof}[Proof of Theorem~\ref{Theorem:reebtypepham}]
The complexification $\fa_\mathbb C$ of $\fa$ is a germ of a holomorphic codimension one foliation
at the origin $0\in \mathbb C^n$.
This is given by the complex one-form $\omega_\mathbb C$ obtained as the complexification of the form $\omega$.
Hence we have $\fa_\mathbb C: \omega_\mathbb C=0$ for $\omega_\mathbb C=dP_\mathbb C + P_\mathbb C {\tilde \omega}_{\mathbb C}$
where ${\tilde \omega}_{\mathbb C}$ is the complexification of ${\tilde \omega}$ and $P_\mathbb C=\sum\limits_{j=1}^r z_j ^d $ is the complex Pham polynomial corresponding to $P$.

We first observe that $\omega\wedge dP = P {\tilde \omega} \wedge dP + P^2 {\tilde \omega} \wedge d {\tilde \omega}$. The same holds for the complexifications
$\omega_\mathbb C \wedge d P_\mathbb C = P_\mathbb C {\tilde \omega}_{\mathbb C} \wedge d P_\mathbb C + P_{\mathbb C} ^2{\tilde \omega}_{\mathbb C} \wedge d {\tilde \omega}_{\mathbb C}$. Hence
by classical Darboux-Jouanolou criterium  the hypersurface $(P_\mathbb C=0)\subset \mathbb C^n$ is invariant
by $\fa_\mathbb C$. More than this, the first homogeneous jet of $\omega_\mathbb C$ is $dP_\mathbb C$. Let us consider  the blow-up at the origin of $\mathbb C^n$ as the map $\sigma\colon \tilde{\mathbb C^n _0} \to \mathbb C^n$, with exceptional divisor $E=\sigma^{-1}(0)\subset \tilde {\mathbb  C^n _0}$ isomorphic to the projective space $\mathbb CP(n-1)$. The inverse image of $\fa_\mathbb C$ is the foliation $(\fa_\mathbb C)^* = \sigma^*(\fa_\mathbb C)$. Denote by $R=\sum\limits_{j=1}^n z_j \frac{\partial}{\partial z_j}$ the Euler vector field. Let us write $\omega_\mathbb C = \sum\limits_{j=\nu} ^\infty \omega_{j}$ in a series of degree $j\geq \nu$ homogeneous one-forms with $\omega_\nu \ne 0$.    We shall say that $\fa_\mathbb C$ is {\it non-dicritical} if $P_{\nu+1}:=\omega_\nu(R)$ is  non-identically zero in which case it is a homogeneous polynomial of degree $\nu+1$. If this is case then the exceptional divisor $E$ is invariant by $(\fa_\mathbb C)^*$ and the singular set $\sing((\fa_\mathbb C)^*)\cap E$ is called {\it tangent cone} of $\fa_\mathbb C$ denoted by $C(\fa_\mathbb C)$. In the non-dicritical case the tangent cone is the projective hypersurface $(P_{\nu+1}=0)\subset E\simeq \mathbb CP(n-1)$.

We claim:

\begin{Claim}
$\fa_{\mathbb C}$
 is non-dicritical and has an irreducible tangent cone.
\end{Claim}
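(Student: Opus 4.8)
The plan is to first pin down the lowest-degree homogeneous part of $\omega_\mathbb{C}$, read off non-dicriticality and the tangent cone from Euler's identity, and then reduce the irreducibility of the tangent cone to the irreducibility of the Fermat-type form $\sum_{j=1}^r z_j^d$.

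First I would isolate the initial jet. Since $dP_\mathbb{C}=\sum_{j=1}^r d\,z_j^{d-1}\,dz_j$ has coefficients homogeneous of degree $d-1$, while $P_\mathbb{C}\,\tilde\omega_\mathbb{C}$ has coefficients of order at least $\deg P_\mathbb{C}=d$ (whatever the order of $\tilde\omega_\mathbb{C}$), there is no cancellation in degree $d-1$. Hence, writing $\omega_\mathbb{C}=\sum_{j\ge\nu}\omega_j$, we have $\nu=d-1$ and $\omega_\nu=dP_\mathbb{C}\ne 0$. Evaluating on the Euler field $R$ and using Euler's identity gives $\omega_\nu(R)=dP_\mathbb{C}(R)=\deg(P_\mathbb{C})\,P_\mathbb{C}=d\,P_\mathbb{C}$, a nonzero homogeneous polynomial of degree $\nu+1=d$. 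Thus $\fa_\mathbb{C}$ is non-dicritical with $P_{\nu+1}=d\,P_\mathbb{C}$, and its tangent cone is the projective hypersurface $(P_\mathbb{C}=0)\subset E\simeq\mathbb{CP}(n-1)$. This part is purely formal.

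It then remains to show this tangent cone is irreducible, i.e. that $P_\mathbb{C}=\sum_{j=1}^r z_j^d$ is irreducible. Since $P_\mathbb{C}$ does not involve $z_{r+1},\ldots,z_n$, its irreducibility is equivalent to irreducibility in $\mathbb{C}[z_1,\ldots,z_r]$, and when $r<n$ the locus $(P_\mathbb{C}=0)\subset\mathbb{CP}(n-1)$ is the cone over the Fermat hypersurface $X:=(P_\mathbb{C}=0)\subset\mathbb{CP}(r-1)$ with vertex $(z_1=\cdots=z_r=0)$; as a cone over an irreducible variety is irreducible, it suffices to treat $X$. The key point is smoothness: the gradient $(d\,z_1^{d-1},\ldots,d\,z_r^{d-1})$ vanishes only at the origin of $\mathbb{C}^r$, so $X$ is a smooth hypersurface of dimension $r-2$. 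If $P_\mathbb{C}$ had a repeated factor $Q$, then $X$ would be singular along $(Q=0)$; and if $P_\mathbb{C}=fg$ with coprime non-constant $f,g$, then (by the dimension count $\dim(f{=}0)+\dim(g{=}0)-(r-1)=r-3\ge 0$ in $\mathbb{CP}(r-1)$) the two components meet, and $X$ is singular there since $\nabla(fg)=g\nabla f+f\nabla g$ vanishes at such a point. As $r\ge 3$ forces $r-1\ge 2$, smoothness of $X$ rules both out, so $P_\mathbb{C}$ is irreducible, proving the claim.

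The main (indeed only) obstacle is this last irreducibility step, and the decisive hypothesis for it is $r\ge 3$: for $r=2$ the form $z_1^d+z_2^d$ splits into $d$ distinct linear factors, which is precisely the mechanism behind the counterexample recorded after Theorem~\ref{Theorem:reebtypepham}. I would also point out that the arithmetic hypothesis $d=p^s$ is \emph{not} used in establishing this claim --- the Fermat cone is irreducible for every $d\ge 2$ once $r\ge 3$ --- so that hypothesis should enter only later, in passing from the irreducible tangent cone to the actual holomorphic first integral.
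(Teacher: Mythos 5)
Your proposal is correct and takes essentially the same route as the paper: the same identification of the initial homogeneous jet $\omega_{\nu}=dP_{\mathbb C}$ with $\nu=d-1$, and the same application of Euler's identity giving $P_{\nu+1}=d\,P_{\mathbb C}\not\equiv 0$, hence non-dicriticality and tangent cone $(P_{\mathbb C}=0)\subset\mathbb{CP}(n-1)$. The only difference is that where the paper simply invokes the ``well-known'' irreducibility of the Pham polynomial for $r\geq 3$, you supply a correct self-contained proof (smoothness of the Fermat hypersurface together with the projective dimension theorem forcing any two factors to meet in a singular point), and your side remarks --- that $r\geq 3$ is precisely what fails in the $r=2$ counterexample, where $z_1^d+z_2^d$ splits into linear factors, and that the hypothesis $d=p^s$ is not used in the claim but only afterwards via the Cerveau--Loray theorem --- both match the paper exactly.
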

\begin{proof}[proof of the claim]
In our case we have $\omega_\mathbb C = dP_\mathbb C + P_\mathbb C {\tilde \omega}_{\mathbb C}$. Since $P_\mathbb C$ is
homogeneous, we conclude that the first homogeneous jet of $\omega_\mathbb C$ is $\omega_{\nu}=dP_\mathbb C$ and
$\nu= d -1$. Hence $\omega_\nu (R) = dP_\mathbb C (R) = d\cdot P_\mathbb C = (\nu+1) P_\mathbb C$ using the classical Euler identity for homogeneous polynomials. Hence, using the above notation we have $P_{\nu+1} = (\nu+1) P_\mathbb C$ which is not identically zero. This shows that $\fa_\mathbb C$ is non-dicritical and moreover that its tangent cone is the projective hypersurface $(P_\mathbb C =0)\subset \mathbb CP(n-1)$. Since $P_\mathbb C$ is the complex Pham polynomial in variables
$(z_1,\ldots,z_r)$ and $r \geq 3$,  which is well-known to be irreducible, we conclude that the tangent cone of $\fa_\mathbb C$ is irreducible.
\end{proof}

We can now apply the main result in \cite{Cerveau-Loray}, i.e., since the degree of the tangent cone is $\nu+1=p^s$ for some prime $p$, we conclude that $\fa_\mathbb C$ admits a holomorphic first integral in some neighborhood of the origin of $\mathbb C^n$. This implies that the foliation $\fa$ admits an analytic first integral in some neighborhood of the origin $0\in \mathbb R^n$
(Lemma~\ref{Lemma:complexificationfirstintegral}).
\end{proof}

\bibliographystyle{amsalpha}

\end{document}